\documentclass[11pt,reqno]{amsart}

\usepackage{amsmath,amssymb,mathtools}
\usepackage{microtype}
\usepackage[margin=1.05in]{geometry}
\usepackage{xcolor}
\usepackage{enumitem}
\usepackage{booktabs}
\usepackage{graphicx}
\usepackage{hyperref}
\usepackage[backend=biber,style=numeric,sorting=nyt,maxbibnames=99]{biblatex}

\hypersetup{
  colorlinks=true,
  linkcolor=blue!45!black,
  citecolor=green!35!black,
  urlcolor=blue!55!black,
  pdftitle={Order-Sensitive Fast-Synapse Limits in Sparse Excitatory--Inhibitory Threshold--Reset Networks},
  pdfauthor={Tonic Song},
  pdfsubject={Mathematical preprint},
  pdfkeywords={integrate-and-fire network, sparse mean-field limit, singular synaptic kernel, threshold reset}
}

\numberwithin{equation}{section}

\newtheorem{theorem}{Theorem}[section]
\newtheorem{lemma}[theorem]{Lemma}
\newtheorem{proposition}[theorem]{Proposition}
\newtheorem{corollary}[theorem]{Corollary}
\theoremstyle{definition}
\newtheorem{definition}[theorem]{Definition}

\theoremstyle{remark}
\newtheorem{remark}[theorem]{Remark}

\newcommand{\E}{\mathrm E}
\newcommand{\I}{\mathrm I}
\newcommand{\Prob}{\mathbb P}
\newcommand{\Exp}{\mathbb E}

\newcommand{\one}{\mathbf 1}
\newcommand{\BL}{\mathrm{BL}}
\newcommand{\Lip}{\operatorname{Lip}}
\newcommand{\supp}{\operatorname{supp}}

\title[Order-Sensitive Fast-Synapse Limits]{Order-Sensitive Fast-Synapse Limits in Sparse Excitatory--Inhibitory Threshold--Reset Networks}
\author{Tonic Song}
\date{13 August 2026}

\subjclass[2020]{Primary 60F17, 60K35; Secondary 92B20, 05C80, 34A38}
\keywords{integrate-and-fire network, sparse mean-field limit, singular synaptic kernel, excitation and inhibition, threshold reset, regularization dependence}

\begin{document}

\begin{abstract}
Componentwise weak convergence of signed synaptic kernels does not, by
itself, determine the fast-synapse limit of a sparse threshold--reset
network.  Within a causal event protocol with clamped refractoriness and
smooth positive-delay kernels, we construct two families whose excitatory
and inhibitory measures converge weakly to $\delta_0$ while their microscopic
arrival orders are reversed.  A target fires in the excitatory-first family
and not in the inhibitory-first family precisely when
$x+a-b<\theta\le x+a$.  Strict margins preserve this response under
perturbations of the target state, aggregate E/I pulse masses, and bounded
drift.
The macroscopic effect persists on a moderately sparse Dale-compatible
random block graph with $q_N\to\infty$ and $q_N/N\to0$.  The two systems
share their graph and initial data.  Along every deterministic joint scale
$\varepsilon_N\downarrow0$, their population-averaged firing counts differ by
$1/2+o_{L^1}(1)$.  A bounded-degree construction and a later probe
show that the discrepancy is macroscopic and can persist through reset.
Fixed positive-delay kernels with finitely many classes admit a stable
regime.  Before grazing, typewise-mixing sparse networks converge to a delayed
class mean-field system.  Directed
Erd\H{o}s--R\'enyi graphs yield the bound
$O_{\mathbb P}(\lambda_N^{-1/2}+\|\pi^N-\pi\|_1)$ when
$\lambda_N\to\infty$ and $\lambda_N/N\to0$.  This separates stable averaging
at a fixed delay from singular collapse.  In the latter, componentwise weak
convergence discards signed arrival-order information needed by the
threshold--reset response.
\end{abstract}

\maketitle
\pagestyle{plain}

\section{Introduction}
\label{sec:introduction}

Large interacting-neuron systems are often reduced by sending the network
size to infinity, by replacing sparse connectivity with a population field,
or by idealizing fast synapses as instantaneous impulses.  These operations
are individually familiar, but their combination is delicate in a
threshold--reset system.  A weak topology on input measures records the
total excitatory and inhibitory mass delivered at a limiting time, but not
how those signed masses are traversed inside a collapsing time window.
Threshold crossing and reset can make that missing order visible.

We isolate this selection problem in deterministic threshold--reset networks
on directed signed graphs.  Every emitted spike travels through a smooth
causal kernel with strictly positive delay, so each finite regularized network
has unambiguous event-driven dynamics.  The two regularizations reverse the
order of one excitatory and one inhibitory pulse.  In both cases, the
excitatory and inhibitory kernel measures converge componentwise and weakly to
$\delta_0$, yet the population firing counts can have different limits.

\subsection{Main results}

The causal finite-network protocol is globally well posed and has a
deterministic no-accumulation firing bound; see
Theorem~\ref{thm:finite-wellposedness-paper}.

The local mechanism is elementary but discontinuous.  A node initially at
$x<\theta$, receiving total excitatory input $a$ and total inhibitory input
$-b$, fires only in the excitatory-first ordering exactly when
\begin{equation}
  x+a-b<\theta\le x+a.
  \label{eq:intro-order-window}
\end{equation}
Strict inequalities give a perturbation margin.
Here $F_N^{\varepsilon,o}(t)$ denotes the population-averaged firing count in
\eqref{eq:population-count}.
Theorem~\ref{thm:random-block} lifts this local mechanism to a Dale-compatible
random block graph with vanishing edge weights and fluctuating degrees.  When
$q_N\to\infty$ and $q_N/N\to0$, both regularizations use the same graph and
initial data, and
\[
 \Exp_G\left|
 F_N^{\varepsilon_N,\E}(t_*)
 -F_N^{\varepsilon_N,\I}(t_*)-\frac12
 \right|\longrightarrow0
\]
for every deterministic sequence $\varepsilon_N\downarrow0$.  The graph is
moderately sparse, and its nonzero weights are of order $1/q_N$.
Proposition~\ref{prop:bounded-degree} gives a bounded-degree construction.
In Proposition~\ref{prop:repeated-reset}, a later probe amplifies the
discrepancy after reset.

At fixed smooth kernels with a positive minimum delay, the causal history on
a short time slab is already determined by the preceding slab.  Under
finite-class and transversal-crossing assumptions, this yields a comparison
theorem before grazing.  If the normalized adjacency matrix is typewise
mixing, Theorem~\ref{thm:fixed-kernel-comparison} bounds the mean hybrid
trajectory error by the graph-mixing and class-proportion defects.  For a
directed Erd\H{o}s--R\'enyi graph of expected degree $\lambda_N$,
Corollary~\ref{cor:fixed-kernel-er} gives an
$O_{\Prob}(\lambda_N^{-1/2}+\|\pi^N-\pi\|_1)$ rate when
$\lambda_N\to\infty$ and $\lambda_N/N\to0$.  The constant depends on the
fixed kernels, their positive delay, and the margin to grazing.  The estimate
is not claimed to be uniform in the singular fast-synapse limit.

\subsection{Relation to prior work}

Classical propagation-of-chaos and mean-field theories replace many weak
interactions by an effective law or field
\cite{sznitman1991,ethierkurtz1986}.  Neural versions include sparse balanced
E/I networks \cite{brunel2000}, McKean--Vlasov integrate-and-fire equations
\cite{delarue2015,delarue2015singular}, exact reductions in special quadratic families
\cite{montbrio2015}, and signed Hawkes limits
\cite{pfaffelhuber2022}.  Random-graph limits have been developed through
both diverging-degree averaging and local weak convergence
\cite{delattre2016,lacker2023}.  Recent results reach sparse stochastic
integrate-and-fire networks with singular Poisson jumps
\cite{jabinzhou2026}, arbitrary non-exchangeable $O(1/N)$ weights
\cite{jabinschmutzzhou2024}, and gradual transmission with common noise
\cite{hambly2026}.  These are positive limit theories in regimes with
additional diffusion, intensity regularization, dense scaling, or gradual
one-sign transmission.  Their hypotheses and limiting variables do not
encode the reversed signed within-window ordering studied here.

Smooth feedback kernels can converge to singular hitting-time
McKean--Vlasov dynamics
\cite{inglistalay2015,hambly2025}.  Singular hitting-time particle systems
also exhibit macroscopic jumps, network heterogeneity, and fragility
\cite{nadtochiyshkolnikov2019,nadtochiyshkolnikov2020}.  In sparse graph
cascades, nonnegative regularized impacts
can converge to a minimal physical instantaneous solution
\cite{guoyan2026}.  Those selection mechanisms exploit a one-sign monotonicity
that is absent when excitation, inhibition, and reset interact.  Work on
physical blowups and vanishing buffering or delay likewise shows that a
specified one-sign regularization can select a physically meaningful
solution \cite{sadun2022,papadopoulos2025}.  Those results do not contradict
ours; they concern a specified one-sign regularization, whereas our boundary
concerns signed nonmonotone input.

Finite synaptic amplitudes and pulse shapes are already known to produce
nonlinear, asymmetric, and sometimes instantaneous neural responses
\cite{helias2010,afifurrahman2021,pietras2024}.  We therefore do not claim
that pulse width or timing matters for the first time.  Our main constructions
are a bounded-degree family and a feedforward, moderately sparse random-block
Dale ensemble.  In both, the componentwise weak kernel limits agree, while
the population firing-count separation holds along every deterministic
diagonal sequence $\varepsilon_N\downarrow0$.
Related crossing-process convergence results
require convergence hypotheses strong enough to control the paths and their
first passages \cite{tamborrino2014}; componentwise weak convergence of the
input kernels is deliberately weaker.

Vector-valued impulsive control separates commutative cases from
noncommutative cases in which graph completion or an internal jump path is
part of the solution concept \cite{aronna2015,fusco2024}.  The present result
provides a concrete sparse-network analogue of this order deficit, but not a
formal embedding theorem for impulsive control.  In this analogy, the
rescaled E/I profile inside the collapsing synaptic window plays the role of
missing graph-completion data.  The hard threshold--reset map amplifies that
deficit to a macroscopic observable.

\subsection{Scope of the results}

We do not define atomic $\delta_0$ dynamics, which would require a collision
resolver.  The results do not prove that general iterated limits fail to
commute, and they leave open a canonical instantaneous resolver supplied with
additional ordered data.

The fixed-kernel theorem neither covers grazing nor gives a bound uniform as
the minimum delay vanishes.  All results concern the mathematical model; no
biological validation is claimed.

\section{Delayed threshold--reset networks}
\label{sec:model}

A deterministic finite network already contains the selection obstruction.
Fix a directed graph on vertices
$\{1,\dots,N\}$.  Vertex $j$ has a source type
$\tau_j\in\{\E,\I\}$, threshold $\theta_j$, reset level
$v_{R,j}<\theta_j$, and absolute refractory period
$\rho_j\ge\rho_*>0$.  A weight $w_{ij}$ is associated with the edge
$j\to i$.  Dale's law means
\begin{equation}
  w_{ij}\ge0\quad\text{if }\tau_j=\E,
  \qquad
  w_{ij}\le0\quad\text{if }\tau_j=\I.
  \label{eq:dale}
\end{equation}

For a regularization label $o$ and width parameter $\varepsilon>0$, let
\begin{equation}
 k_\tau^{\varepsilon,o}\in C_c^1((0,\infty)),
 \qquad k_\tau^{\varepsilon,o}\ge0,
 \qquad \int_0^\infty k_\tau^{\varepsilon,o}(s)\,ds=1.
 \label{eq:kernel-assumptions}
\end{equation}
Thus every kernel is causal and vanishes in a neighborhood of zero.  The
right-continuous firing count of vertex $i$ is denoted by
$M_i^{N,\varepsilon,o}$.  Its synaptic input current is
\begin{equation}
 I_i^{N,\varepsilon,o}(t)
 :=\sum_{j=1}^N w_{ij}
 \int_{[0,t)} k_{\tau_j}^{\varepsilon,o}(t-s)
 \,dM_j^{N,\varepsilon,o}(s).
 \label{eq:finite-current}
\end{equation}
The weight is the integrated voltage increment delivered by one spike; the
kernel has units of inverse time.

Let $X_i$ be the voltage and $A_i$ the elapsed time since the most recent
firing.  Outside refractory intervals,
\begin{equation}
 \dot X_i(t)=b_i(X_i(t),A_i(t))+I_i^{N,\varepsilon,o}(t),
 \qquad \dot A_i(t)=1.
 \label{eq:active-dynamics}
\end{equation}
For every finite age horizon, $b_i$ is continuous in both variables,
globally Lipschitz in voltage uniformly over that age range, and has at most
linear growth.  There are no pre-zero spikes or in-flight inputs.  Initially
all vertices are active,
\begin{equation}
 X_i(0-)=x_i^0,
 \qquad A_i(0-)=a_i^0\ge\rho_i,
 \qquad M_i(0-)=0.
 \label{eq:initial-data}
\end{equation}

Every node with $x_i^0\ge\theta_i$ fires once at time zero, and all such
initial firings are processed simultaneously.  When an active voltage first
reaches threshold at a later time $T$, its firing count increases by one,
the voltage is reset to $v_{R,i}$, and the node is clamped at that value on
$[T,T+\rho_i)$.  Synaptic currents continue to evolve during the clamp but
are not integrated by the clamped node.  Upon release, any remaining kernel
tail acts normally.  If several active nodes first reach threshold at the
same time, they fire and reset simultaneously.  The positive delay in
\eqref{eq:kernel-assumptions} prevents a newly emitted spike from causing a
second event at the same mathematical instant.

\begin{theorem}[Finite-network well-posedness]
\label{thm:finite-wellposedness-paper}
For every finite graph, every $\varepsilon>0$, and every regularization
label $o$, the event protocol has a unique global trajectory.  Moreover, for
every $T<\infty$,
\begin{equation}
 M_i^{N,\varepsilon,o}(T)
 \le 1+\left\lfloor\frac{T}{\rho_i}\right\rfloor
 \le 1+\left\lfloor\frac{T}{\rho_*}\right\rfloor.
 \label{eq:firing-bound}
\end{equation}
In particular, bounded intervals contain only finitely many firing events.
\end{theorem}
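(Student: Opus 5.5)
The plan is an event-by-event construction on successive time slabs, using the minimum delay of the kernels together with the refractory clamp.

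\textbf{Minimum delay.} Since the network is finite and the labels $o$, $\varepsilon$ are fixed, only finitely many kernels $k_\E^{\varepsilon,o},k_\I^{\varepsilon,o}$ occur. Each lies in $C_c^1((0,\infty))$, so there is $\delta>0$ such that every kernel vanishes on $[0,\delta]$ and is bounded and continuous. Consequently, a spike emitted at time $s$ contributes nothing to any current $I_i$ on $[s,s+\delta]$.

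\textbf{Step 1: one slab at a time.} Suppose the trajectory has been uniquely constructed on $[0,t_0]$. Here $t_0$ is either $0$, after the simultaneous initial firings, or an event time. Then on $[t_0,t_0+\delta]$ every current $I_i$ is a finite sum of kernel translates of spikes emitted at times $\le t_0$. Hence $I_i$ is a known continuous, bounded function on this slab, independent of any firing occurring after $t_0$.

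On the slab, each active node then solves an ODE with a continuous right-hand side $b_i(X,A)+I_i(t)$. This right-hand side is globally Lipschitz in $X$, uniformly on the finite age range involved, and has linear growth. By Picard--Lindelöf, the solution exists and is unique without blow-up. Clamped nodes are held at $v_{R,i}$ until release, after which the ODE runs from $v_{R,i}$.

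Each node's first threshold time in the slab is then a well-defined infimum of a closed set, and it is determined by the frozen current alone. The argument now iterates in one of two ways:
\begin{itemize}
\item define the next event time $t_1$ as the minimum of these first hitting times and of $t_0+\delta$, fire and reset all nodes hitting at $t_1$, and repeat;
\item alternatively, note that within the slab the nodes are decoupled, so every node's full firing sequence in the slab can be computed directly.
\end{itemize}
In the second version, the refractory period $\rho_i\ge\rho_*$ separates successive firings of a node. Each node therefore fires at most $\lceil\delta/\rho_*\rceil+1$ times in the slab, and the slab is constructed uniquely. Uniqueness on the slab holds because any trajectory of the protocol must see these same frozen currents. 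Concatenating the slabs $[k\delta,(k+1)\delta]$ gives a global unique trajectory, since the slabs have fixed length $\delta$ and cover $[0,\infty)$. This avoids any Zeno issue.

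\textbf{Step 2: firing bound.} After the possible firing at time $0$, any two consecutive firings of node $i$ at times $T_1<T_2$ satisfy $T_2\ge T_1+\rho_i$. This holds because the node is clamped on $[T_1,T_1+\rho_i)$ and cannot reach threshold there. A node that was not at threshold initially also cannot fire at time $0$: firing at positive times requires the voltage to first reach $\theta_i$ in $(0,\infty)$. Let the firing times in $[0,T]$ be $T_1<\dots<T_m$. Then $T_m-T_1\ge(m-1)\rho_i$ and $T_m-T_1\le T$, so $m\le 1+\lfloor T/\rho_i\rfloor$. The second inequality in \eqref{eq:firing-bound} follows from $\rho_i\ge\rho_*$.

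\textbf{Main obstacle.} The delicate point is the bookkeeping at slab boundaries and simultaneous events. Two issues need checking:
\begin{itemize}
\item A spike emitted exactly at an event time $s$ must not alter the dynamics at $s$. This holds because the kernel vanishes near $0$ and the current integrates over $[0,t)$.
\item A node released from its clamp exactly at threshold-level input must be handled by right-continuity conventions.
\end{itemize}
Beyond these conventions, the frozen-current argument carries the proof.
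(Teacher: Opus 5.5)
Your proof is correct, but it follows a different route from the paper.

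The paper builds the trajectory event by event. Between consecutive threshold hits and scheduled releases, the current generated by the finite known history is continuous, so active voltages solve ODEs up to the next event. The only global issue is accumulation of event times, and the paper excludes it with the refractory count bound: each node fires at most $1+\lfloor T/\rho_i\rfloor$ times on $[0,T]$, there are finitely many nodes, and each release is tied to a firing.

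Your decisive step is instead your second bullet. The finitely many kernels share a support gap $\delta>0$, so on each slab $[k\delta,(k+1)\delta]$ the currents are frozen by spikes emitted before $k\delta$. The network then decouples into independent single-node hybrid problems with known continuous forcing, and global existence follows by concatenating slabs of fixed length. This is the same causal-slab argument the paper itself uses later for Lemma~\ref{lem:class-wellposedness}.

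What each route buys:
\begin{itemize}
\item Your slab argument makes uniqueness transparent, since every admissible trajectory sees the same frozen current on each slab. It also removes the Zeno question structurally; refractoriness is needed only to bound firings within one slab and for \eqref{eq:firing-bound}.
\item The paper's route lets refractoriness carry non-accumulation, so it does not need the uniform support gap for the global extension.
\end{itemize}

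Your first bullet is essentially the paper's construction, and on its own it would still require that non-accumulation argument. It is your second bullet that closes the proof.

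Finally, the second concern in your ``main obstacle'' is vacuous. Release occurs at $v_{R,i}<\theta_i$ and the active voltage is continuous afterwards, so no threshold contact can happen at a release instant.
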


\begin{proof}
Process the initially superthreshold nodes at time zero.  After finitely
many events, the current generated by the known history is a finite sum of
translated $C_c^1$ kernels and is continuous.  On any interval with fixed
active/refractory status, active voltages therefore solve ordinary
differential equations with continuous time dependence and locally
Lipschitz voltage dependence.  They have unique solutions up to the next
threshold hit, scheduled release, or infinity.  At a threshold event reset
all simultaneous hitters and append their spikes; at a release restart the
corresponding active branch.

Finite-time event accumulation is impossible.  Consecutive firings of node
$i$ are separated by at least $\rho_i$, so the first bound in
\eqref{eq:firing-bound} holds.  With finitely many nodes, infinitely many
firings in a bounded interval would force one node to violate this bound.
Every firing schedules one release, so release times cannot accumulate
either.  The event construction extends globally and is unique.
\end{proof}

\begin{remark}[Why the atomic model is not defined here]
\label{rem:atomic-not-defined}
Theorem~\ref{thm:finite-wellposedness-paper} covers every smooth delayed
kernel used below.  It does not prescribe how an atomic signed input should
be traversed when multiple masses arrive at one time.  Such a prescription
is an extra collision resolver, analogous to graph-completion data in
impulsive control; its absence is tested in
Sections~\ref{sec:local-order}--\ref{sec:macroscopic}.
\end{remark}

For comparing macroscopic outputs, define the population-averaged firing
count
\begin{equation}
 F_N^{\varepsilon,o}(t)
 :=\frac1N\sum_{i=1}^N M_i^{N,\varepsilon,o}(t).
 \label{eq:population-count}
\end{equation}
We call a graph sequence \emph{moderately sparse} when its realized average
degree $d_N:=|E_N|/N$ satisfies $d_N\to\infty$ and $d_N/N\to0$ in the mode
of convergence stated for that sequence.

\section{Weakly identical kernels with different local outputs}
\label{sec:local-order}

Fix a nonnegative $k\in C_c^\infty((0,1))$ with
$\int_0^1k(u)\,du=1$.  For $\varepsilon>0$, define the excitatory-first
family by
\begin{equation}
 k_\E^{\varepsilon,\E}(t)
 =\frac1\varepsilon k\!\left(\frac{t-\varepsilon}{\varepsilon}\right),
 \qquad
 k_\I^{\varepsilon,\E}(t)
 =\frac1\varepsilon k\!\left(\frac{t-3\varepsilon}{\varepsilon}\right),
 \label{eq:e-first-kernels}
\end{equation}
and define the inhibitory-first family by interchanging the two supports:
\begin{equation}
 k_\I^{\varepsilon,\I}(t)
 =\frac1\varepsilon k\!\left(\frac{t-\varepsilon}{\varepsilon}\right),
 \qquad
 k_\E^{\varepsilon,\I}(t)
 =\frac1\varepsilon k\!\left(\frac{t-3\varepsilon}{\varepsilon}\right).
 \label{eq:i-first-kernels}
\end{equation}
For each $\tau\in\{\E,\I\}$ and $o\in\{\E,\I\}$,
\begin{equation}
 k_\tau^{\varepsilon,o}(t)\,dt
 \Longrightarrow \delta_0
 \qquad(\varepsilon\downarrow0).
 \label{eq:componentwise-weak-limit}
\end{equation}
The type-marked weak limit is therefore identical in the two families.  On
the rescaled time $t=\varepsilon u$, however, they retain opposite E/I
orders.

Consider one target starting from $x<\theta$.  A source spike at time zero
delivers total excitatory input $a>0$ and total inhibitory input $-b<0$.
For the moment the target has zero continuous drift and no noise on
$(0,4\varepsilon)$.  It has reset level $v_R<\theta$ and refractory period
$\rho>4\varepsilon$, so it can fire at most once in the microscopic window.

\begin{lemma}[Single-target order criterion]
\label{lem:single-target-order-paper}
The excitatory-first and inhibitory-first families produce different firing
counts on $[0,4\varepsilon]$ if and only if
\begin{equation}
 \boxed{x+a-b<\theta\le x+a.}
 \label{eq:order-criterion}
\end{equation}
In that case the excitatory-first target fires once and the
inhibitory-first target does not fire.
\end{lemma}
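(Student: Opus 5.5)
The argument is an explicit trajectory computation in both families, using only the disjointness of the two kernel supports. With zero drift on $(0,4\varepsilon)$ and the target initially active, the voltage before any firing is
\[
 X(t)=x+aK_1(t)-bK_2(t)
 \quad\text{or}\quad
 X(t)=x-bK_1(t)+aK_2(t),
\]
the first in the excitatory-first family and the second in the inhibitory-first family. Here
\[
 K_1(t)=\int_0^t\varepsilon^{-1}k((s-\varepsilon)/\varepsilon)\,ds,
 \qquad
 K_2(t)=\int_0^t\varepsilon^{-1}k((s-3\varepsilon)/\varepsilon)\,ds.
\]
Since $\supp k\subset(0,1)$, $K_1$ is nondecreasing, vanishes near $0$, increases only on $(\varepsilon,2\varepsilon)$, and equals $1$ from $2\varepsilon$ on. Likewise $K_2$ vanishes on $[0,3\varepsilon]$ and increases only on $(3\varepsilon,4\varepsilon)$. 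By Theorem~\ref{thm:finite-wellposedness-paper} the trajectory is unique, so this formula holds up to the first threshold hit. Because $\rho>4\varepsilon$, there is at most one firing in $[0,4\varepsilon]$; after a firing the node is clamped at $v_R$ and no further input is integrated in the window.

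\emph{Excitatory-first family.} On $[0,3\varepsilon]$ the voltage is $x+aK_1(t)$, which is continuous and nondecreasing, with maximum $x+a$ attained exactly at $t=2\varepsilon$. On $[3\varepsilon,4\varepsilon]$ it is $x+a-bK_2(t)\le x+a$, nonincreasing. So the target fires in $[0,4\varepsilon]$ if and only if $\sup_t X(t)=x+a\ge\theta$. Since $x<\theta$, the hit occurs at a positive time, at the first $t\le 2\varepsilon$ where $x+aK_1(t)=\theta$, by the intermediate value theorem.

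\emph{Inhibitory-first family.} The voltage first decreases to $x-b$ on $[0,2\varepsilon]$, stays there, and then rises to $x-b+a$ on $[3\varepsilon,4\varepsilon]$. Its supremum over the window is $\max(x,\,x+a-b)$. Since $x<\theta$, the target fires if and only if $x+a-b\ge\theta$. This closed-interval boundary is attained exactly at $t=4\varepsilon$, which lies in $[0,4\varepsilon]$.

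\emph{Combining.} The excitatory-first count is $\one\{x+a\ge\theta\}$ and the inhibitory-first count is $\one\{x+a-b\ge\theta\}$. Because $b>0$, the second event implies the first. The counts therefore differ exactly when $x+a\ge\theta>x+a-b$, which is \eqref{eq:order-criterion}, and in that case the excitatory-first count is $1$ and the inhibitory-first count is $0$.

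The only delicate step is the boundary bookkeeping, for which I will be explicit about closed versus open times. Threshold is "reached" when $X\ge\theta$ (non-strict), which matches the non-strict $\theta\le x+a$. The inhibitory arrival completes by time $4\varepsilon$, and the window $[0,4\varepsilon]$ is closed, which gives the strict $x+a-b<\theta$ for non-firing. I will also note that no spike exists at time $0$, since $x<\theta$, and that the clamp prevents a second firing.
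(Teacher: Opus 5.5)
Your proof is correct and follows the same route as the paper. You compute the pre-reset free path under each ordering (a monotone rise to $x+a$ for E-first; a dip to $x-b$ followed by a rise to $x+a-b$ for I-first), compare the indicators $\one\{x+a\ge\theta\}$ and $\one\{x+a-b\ge\theta\}$, and use $\rho>4\varepsilon$ to exclude a second firing; your cumulative kernels $K_1,K_2$ only make the paper's sketch explicit.

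One cosmetic point: $\supp k$ is a compact subset of $(0,1)$, so $x+a$ and $x+a-b$ are first reached strictly before $2\varepsilon$ and $4\varepsilon$. Also, in the I-first system it is the excitatory arrival, not the inhibitory one, that completes last. The closedness of the window is therefore not actually needed.
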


\begin{proof}
Before reset, the excitatory-first free path increases from $x$ to $x+a$
and therefore reaches threshold if and only if $x+a\ge\theta$.  The
inhibitory-first free path first decreases from $x$ to $x-b$ and then
increases to $x-b+a$, so it reaches threshold if and only if
$x-b+a\ge\theta$.  The two indicators differ exactly when the first
condition holds and the second fails.  Refractoriness excludes a second
firing.
\end{proof}

The strict margins in \eqref{eq:order-criterion} tolerate small errors in the
state and pulse masses.  This is the form needed in
Section~\ref{sec:macroscopic}, where aggregate random inputs only concentrate
near their nominal values.

\begin{lemma}[Robust separated batch]
\label{lem:robust-batch-paper}
Suppose that for some $\eta>0$,
\begin{equation}
 \min\{\theta-x,\ x+a-\theta,\ \theta-(x+a-b)\}\ge\eta.
 \label{eq:robust-margin}
\end{equation}
Use the separated supports above, but let the actual initial left voltage
and aggregate input magnitudes be $x'$, $A\ge0$, and $B\ge0$.  Suppose that
under both orderings the continuous drift rate before a first hit has
magnitude at most $L_{\rm dr}$ and that
\begin{equation}
 |x'-x|+|A-a|+|B-b|+4L_{\rm dr}\varepsilon<\eta,
 \qquad 4\varepsilon<\rho.
 \label{eq:robust-perturbation}
\end{equation}
Then the target fires exactly once in the excitatory-first system and does
not fire in the inhibitory-first system on $[0,4\varepsilon]$.
\end{lemma}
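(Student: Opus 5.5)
I will compare both orderings with the unperturbed free paths of Lemma~\ref{lem:single-target-order-paper}, but track the actual voltage $X'$ directly. Write $\delta:=|x'-x|+|A-a|+|B-b|+4L_{\rm dr}\varepsilon<\eta$. Take the source spike at time zero, as in the setup of Lemma~\ref{lem:single-target-order-paper}. Then the first-arriving kernel is supported in $[\varepsilon,2\varepsilon]$ and the second in $[3\varepsilon,4\varepsilon]$. Let $P(t)$ be the pulse integral of the first kernel over $[0,t]$ and $Q(t)$ that of the second, each running monotonically from $0$ to $1$. Before a first hit, the actual voltage is
\[
 X'(t)=x'+R(t)+\text{(signed pulse contributions)},
 \qquad |R(t)|\le L_{\rm dr}t\le 4L_{\rm dr}\varepsilon
\]
on $[0,4\varepsilon]$. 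The key steps are upper and lower bounds on $X'$ obtained from this representation.

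\emph{Excitatory-first system.} First, $x'<\theta$, because $x'\le x+\delta<x+\eta\le\theta$; so the target does not fire at time zero. Suppose no hit occurs on $[0,4\varepsilon]$. At time $2\varepsilon$ the full excitatory mass has arrived and no inhibition has, so
\[
 X'(2\varepsilon)\ge x'+A-4L_{\rm dr}\varepsilon\ge x+a-\delta>x+a-\eta\ge\theta .
\]
This contradicts the assumption, so some first hit time $T\le2\varepsilon$ exists. After it, the node is clamped on $[T,T+\rho)\supset[T,4\varepsilon]$, because $4\varepsilon<\rho$. Hence exactly one firing occurs on $[0,4\varepsilon]$.

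\emph{Inhibitory-first system.} On $[0,4\varepsilon]$ the voltage is
\[
 X'(t)=x'-BP(t)+AQ(t)+R(t)\le x'+AQ(t)+4L_{\rm dr}\varepsilon .
\]
Since $Q$ vanishes before $3\varepsilon$, on $[0,3\varepsilon]$ this gives $X'<\theta$, by the margin $\theta-x\ge\eta$. On $[3\varepsilon,4\varepsilon]$ we have $P=1$, so
\[
 X'\le x'-B+A+4L_{\rm dr}\varepsilon\le x+a-b+\delta<\theta ,
\]
using the third margin. This bound uses $AQ\le A$, which holds because $A\ge0$ and $Q\le1$.

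The only point needing care is the drift term in the two-sided bound. The drift is bounded only up to a first hit, so I would run a standard continuity argument. Define $\sigma$ as the first hit time, or $4\varepsilon$ if there is none. The estimates above hold on $[0,\sigma]$, and they show that no hit is possible in the inhibitory-first case. A second point is that the kernels of other sources are not in play: the aggregate masses $A$ and $B$ already encode all input. If needed, I would interpret $A$ and $B$ as the total masses delivered through the respective supports, with monotone partial masses. The argument then goes through verbatim.
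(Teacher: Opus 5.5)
Your proof is correct and follows the paper's argument essentially step for step. For the excitatory-first system you derive a contradiction at $2\varepsilon$ and let the refractory clamp cover the rest of the batch. For the inhibitory-first system you bound the voltage separately on $[0,3\varepsilon]$ and on $[3\varepsilon,4\varepsilon]$, using $AQ\le A$ and the full arrival of the inhibitory pulse. The one small slip is cosmetic: to conclude $T\le 2\varepsilon$ you should assume no hit on $[0,2\varepsilon]$ rather than on $[0,4\varepsilon]$. Since $4\varepsilon<\rho$, the conclusion does not need $T\le 2\varepsilon$ anyway.
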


\begin{proof}
The margin implies $x'<\theta$.  If the excitatory-first target has not hit
by $2\varepsilon$, then the entire excitatory pulse and none of the
inhibitory pulse has arrived, giving
\[
 X^\E(2\varepsilon)
 \ge x+a-|x'-x|-|A-a|-2L_{\rm dr}\varepsilon>\theta,
\]
a contradiction.  Hence it fires by $2\varepsilon$, and the refractory
interval covers the rest of the batch.

For the inhibitory-first system, a hit by $3\varepsilon$ is impossible
because the cumulative synaptic contribution is nonpositive and
$x+|x'-x|+3L_{\rm dr}\varepsilon<\theta$.  A hit between
$3\varepsilon$ and $4\varepsilon$ is also impossible: the inhibitory pulse
has arrived completely and the excitatory contribution is at most $A$, so
\[
 X^\I(t)
 \le x+a-b+|x'-x|+|A-a|+|B-b|+4L_{\rm dr}\varepsilon
 <\theta.
\]
\end{proof}

The discontinuity is not in \eqref{eq:componentwise-weak-limit}; it is in
the response map at a collision of signed impulses.  The weak limit records
the endpoint $x+a-b$ but not whether the path visited $x+a$ first.

\section{Macroscopic separation under componentwise weak kernel limits}
\label{sec:macroscopic}

The local order mechanism persists on graph sequences.  Our main
construction uses random block connectivity with diverging but sublinear
expected degree and vanishing edge weights.  The E-first and I-first systems
share the graph and initial data; only the type-dependent kernels
\eqref{eq:e-first-kernels}--\eqref{eq:i-first-kernels} differ.

\begin{theorem}[Random moderately sparse separation]
\label{thm:random-block}
Let $N=4m$, and let $q_N>0$ satisfy
\begin{equation}
 q_N\longrightarrow\infty,
 \qquad \frac{q_N}{N}\longrightarrow0.
 \label{eq:q-sparse}
\end{equation}
Partition the vertices into source sets $S_\E,S_\I$ and a target set $T$
with sizes $N/4,N/4,N/2$.  Independently retain every possible edge from
$S_\E$ to $T$ and from $S_\I$ to $T$ with probability
\begin{equation}
 p_N=\frac{2q_N}{N}.
 \label{eq:block-probability}
\end{equation}
This is a valid probability for all sufficiently large $N$ by
\eqref{eq:q-sparse}.  Give vertices in $S_\E$ and $S_\I$ types $\E$ and
$\I$, respectively; target types may be arbitrary.
Give retained excitatory edges weight $2a/q_N$ and retained inhibitory
edges weight $-2b/q_N$, and add no other edges.  Initialize every source at
threshold and every target at the same $x<\theta$.  All drifts vanish.
Assume that for some $\eta>0$,
\begin{equation}
 x+a\ge\theta+3\eta,
 \qquad x+a-b\le\theta-3\eta.
 \label{eq:random-margin}
\end{equation}
Then, for every fixed $t_*>0$ and every deterministic sequence
$\varepsilon_N\downarrow0$,
\begin{equation}
 F_N^{\varepsilon_N,\E}(t_*)
 -F_N^{\varepsilon_N,\I}(t_*)
 \xrightarrow[N\to\infty]{L^1(\Prob_G)}\frac12.
 \label{eq:random-gap}
\end{equation}
Moreover, if $d_N(G)=|E_N|/N$, then
\begin{equation}
 \frac{d_N(G)}{q_N}\xrightarrow{\Prob_G}\frac12.
 \label{eq:degree-concentration}
\end{equation}
If in addition $q_N/\log N\to\infty$, then the count difference equals
$1/2$ with probability tending to one.  The graph obeys Dale's law.
\end{theorem}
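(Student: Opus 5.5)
The plan is to reduce the network statement to the robust batch criterion of Lemma~\ref{lem:robust-batch-paper} applied target by target, and then average over targets.

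\textbf{Dynamics.} Because the graph is feedforward (sources have no incoming edges) and all drifts vanish, the dynamics are explicit. Every source starts at threshold, fires exactly once at time zero, and then receives no input. After release it sits at $v_R<\theta$ with zero drift, so it never fires again and contributes exactly one spike. Hence for every $N$ and $t_*>0$ the sources contribute $\frac{1}{N}\cdot\frac N2=\frac12$ to both $F_N^{\varepsilon_N,\E}(t_*)$ and $F_N^{\varepsilon_N,\I}(t_*)$, and this cancels in the difference.

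A target $i\in T$ receives excitatory mass $A_i=2aD_i^\E/q_N$ and inhibitory mass $B_i=2bD_i^\I/q_N$, where $D_i^\E,D_i^\I\sim\mathrm{Bin}(N/4,p_N)$ are independent with mean $q_N/2$. All input arrives in $[0,4\varepsilon_N]$, since the sources fire only at time zero. After that window a target's voltage is constant, or clamped and then constant at $v_R<\theta$. So on $[4\varepsilon_N,t_*]$ no further firing occurs once $4\varepsilon_N<\min(\rho,t_*)$. Here I take the targets' refractory periods to be $\ge\rho_*$, and for the E-first fired target, release at $v_R$ with no remaining input. In the I-first system, a target that does not fire ends at $x+A_i-B_i<\theta$ and stays there.

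\textbf{Applying the lemma.} Lemma~\ref{lem:robust-batch-paper} is applied with $x'=x$ and $L_{\rm dr}=0$. Call target $i$ \emph{good} if $|A_i-a|+|B_i-b|<\eta$. The margin \eqref{eq:random-margin} gives \eqref{eq:robust-margin} with margin $3\eta\ge\eta$, once we also note $\theta-x\ge\eta$. This last inequality needs a small check: it does not follow from \eqref{eq:random-margin} directly. However, $x<\theta$ is given, and it suffices to replace $\eta$ by $\eta'=\min(\eta,\theta-x)$. So on a good target the count difference is exactly $1$. On any target the difference lies in $[-1,1]$, by the same refractory argument: each target fires at most once on the window, and not again afterwards. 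Therefore
\[
\Bigl|F_N^{\varepsilon_N,\E}(t_*)-F_N^{\varepsilon_N,\I}(t_*)-\tfrac12\Bigr|\le \frac{2}{N}\#\{i\in T:\ i\text{ bad}\}.
\]
Taking expectations, the bound is at most $\Prob(\text{target 1 bad})$.

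\textbf{Concentration.} Chebyshev gives $\Prob(|2D^\E/q_N-1|>\delta)\le \frac{4\,\mathrm{Var}D^\E}{\delta^2q_N^2}\le \frac{2}{\delta^2 q_N}\to0$. This yields the $L^1$ convergence \eqref{eq:random-gap}, uniformly in the sequence $\varepsilon_N$; that sequence enters only through $4\varepsilon_N<\min(\rho_*,t_*)$ eventually.

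For \eqref{eq:degree-concentration}, note that $|E_N|\sim\mathrm{Bin}(2\cdot\frac N4\cdot\frac N2,p_N)$ has mean $Nq_N/2$ and variance at most $Nq_N/2$. Chebyshev then gives $|E_N|/(Nq_N)\to\frac12$ in probability.

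Under $q_N/\log N\to\infty$, a Chernoff bound gives $\Prob(|D^\E-q_N/2|>\delta q_N/2)\le 2e^{-c\delta^2q_N}$. A union bound over the $N$ targets and two types then gives $2Ne^{-c\delta^2 q_N}\to0$, so all targets are good with probability $\to1$, and the difference equals exactly $\frac12$. Dale's law is immediate from the sign choices.

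\textbf{Main obstacle.} The only delicate point is confirming that nothing happens outside the microscopic window. Two things must be checked: that the E-first target released at $v_R$ after $\rho>4\varepsilon_N$ sees no residual kernel tail, and that sources never refire. Both follow from the kernel supports lying in $(0,4\varepsilon_N)$ and from zero drift.
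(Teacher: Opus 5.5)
Your proposal is correct and follows the paper's proof: a per-target concentration event for $(A_i,B_i)$, the deterministic order response on that event, a bound on the gap by twice the bad-target fraction, binomial concentration for $|E_N|$, and a Chernoff estimate with a union bound for exactness when $q_N/\log N\to\infty$. The differences are minor: Chebyshev suffices for the $L^1$ claim where the paper uses Chernoff, and your $\eta'=\min(\eta,\theta-x)$ adjustment is harmless but unnecessary, since with $x'=x$ and zero drift only $x<\theta$ is needed, as the paper's direct monotonicity argument shows.
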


\begin{proof}
For a target $i$, let $D_{i,\E}$ and $D_{i,\I}$ be its excitatory and
inhibitory in-degrees and write
\begin{equation}
 A_i=\frac{2a}{q_N}D_{i,\E},
 \qquad B_i=\frac{2b}{q_N}D_{i,\I}.
 \label{eq:aggregate-random-inputs}
\end{equation}
Under the common-graph coupling, $(A_i,B_i)$ is identical in the two
regularizations.  Also
\[
 D_{i,\E},D_{i,\I}
 \sim\operatorname{Bin}\!\left(\frac N4,\frac{2q_N}{N}\right),
 \qquad
 \Exp_GD_{i,\E}=\Exp_GD_{i,\I}=\frac{q_N}{2}.
\]
Standard Chernoff bounds give $c=c(a,b,\eta)>0$ such that, for
\[
 \mathcal G_i:=\{|A_i-a|\le\eta,\ |B_i-b|\le\eta\},
\]
we have
\begin{equation}
 \Prob_G(\mathcal G_i^c)\le4e^{-cq_N}.
 \label{eq:target-chernoff}
\end{equation}

For all large $N$, $4\varepsilon_N<\min\{t_*,\rho_*\}$.  On
$\mathcal G_i$, the strict bounds
$x+A_i\ge\theta+2\eta$ and
$x+A_i-B_i\le\theta-\eta$ give the conclusion directly.  Under E-first,
the target path is monotone increasing until the end of the excitatory
pulse and therefore must cross threshold.  Under I-first, it first moves
downward and then increases monotonically only to the subthreshold terminal
value $x+A_i-B_i$; hence it never fires.  Refractoriness makes the target
fire at most once under either ordering.  Define
\[
 Z_i^N:=M_i^{N,\varepsilon_N,\E}(t_*)
       -M_i^{N,\varepsilon_N,\I}(t_*),
 \qquad
 H_N:=\frac1{|T|}\sum_{i\in T}\one_{\mathcal G_i^c}.
\]
Then $Z_i^N=1$ on $\mathcal G_i$, $|Z_i^N|\le1$, and
\[
 \left|\frac1{|T|}\sum_{i\in T}Z_i^N-1\right|\le2H_N,
 \qquad
 \Exp_GH_N\le4e^{-cq_N}\longrightarrow0.
\]
The source firing counts agree exactly and $|T|/N=1/2$, proving
\eqref{eq:random-gap}.  Notice that no independence among the good-target
events was used and no relative-rate condition on $q_N$ and
$\varepsilon_N$ appears.

There are $N^2/4$ possible edges, so
\[
 |E_N|\sim\operatorname{Bin}\!\left(\frac{N^2}{4},
                                    \frac{2q_N}{N}\right),
 \qquad \Exp_G|E_N|=\frac{Nq_N}{2}.
\]
Another Chernoff estimate yields \eqref{eq:degree-concentration}.  A union
bound in \eqref{eq:target-chernoff} gives
$\Prob_G(\bigcup_{i\in T}\mathcal G_i^c)\le2Ne^{-cq_N}$ after changing
$c$, which proves the exact high-probability statement.  Edge sign is
determined by the source block, establishing Dale's law.
\end{proof}

Diverging degree is not required for the separation.  The following
bounded-degree construction isolates the same local mechanism, while
Theorem~\ref{thm:random-block} gives the large-random-network result.

\begin{proposition}[Bounded-degree separation]
\label{prop:bounded-degree}
There are Dale-compatible signed directed networks whose maximum in- and
out-degrees are uniformly bounded, such that for every fixed $t_*>0$ and
every sequence $\varepsilon_N\downarrow0$,
\begin{equation}
 F_N^{\varepsilon_N,\E}(t_*)
 -F_N^{\varepsilon_N,\I}(t_*)\longrightarrow\frac13.
 \label{eq:bounded-gap}
\end{equation}
\end{proposition}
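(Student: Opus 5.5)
The construction is a disjoint union of identical three-vertex motifs, so the population count reduces to the single-target criterion of Lemma~\ref{lem:single-target-order-paper}. Take $N=3m$ and split the vertex set into $m$ triples $\{e_k,\iota_k,r_k\}$. Give $e_k$ type $\E$ and $\iota_k$ type $\I$; the type of $r_k$ is irrelevant. Include only the edges $e_k\to r_k$ with weight $a>0$ and $\iota_k\to r_k$ with weight $-b<0$. These weights respect \eqref{eq:dale}. Every in-degree is at most $2$ and every out-degree at most $1$, uniformly in $N$. Set all drifts to zero. Initialize $x_{e_k}^0=x_{\iota_k}^0=\theta$ and $x_{r_k}^0=x$, where $(x,a,b)$ satisfy the strict form of \eqref{eq:order-criterion}, say with margin $\eta>0$ as in \eqref{eq:robust-margin}. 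Use the same graph and data for both labels $o\in\{\E,\I\}$.

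By Theorem~\ref{thm:finite-wellposedness-paper} both protocols are well posed. Each source fires once at time $0$. A source has no incoming edges and zero drift, so after release from the clamp it stays at $v_R<\theta$ forever. Hence $M_{e_k}(t_*)=M_{\iota_k}(t_*)=1$ under both orderings, and the source contributions cancel in the difference. Each target receives exactly one excitatory pulse of mass $a$, supported in $[\varepsilon,2\varepsilon]$ or $[3\varepsilon,4\varepsilon]$ according to $o$, and one inhibitory pulse of mass $b$ in the complementary slot. Take $N$ large enough that $4\varepsilon_N<\min\{t_*,\rho_*\}$. Then Lemma~\ref{lem:single-target-order-paper}, or Lemma~\ref{lem:robust-batch-paper} with zero perturbation and $L_{\rm dr}=0$, shows that the E-first target fires exactly once in $[0,4\varepsilon_N]$ and the I-first target does not fire there.

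The remaining step, and the only real obstacle, is controlling $(4\varepsilon_N,t_*]$, since the firing count is evaluated at the fixed time $t_*$ rather than at the end of the microscopic window.

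\emph{E-first target.} It fires at some time $T\le2\varepsilon_N$, which is before any inhibition arrives, and is clamped on $[T,T+\rho)$. This clamp covers the window because $\rho>4\varepsilon_N$, so it ignores the inhibitory pulse. After release there is no input and no drift, so it stays at $v_R$ and its count is $1$.

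\emph{I-first target.} After $4\varepsilon_N$ all kernels have vanished, so its voltage stays at $x+a-b<\theta$ and its count is $0$.

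Therefore, for all large $N$, exactly,
\[
 F_N^{\varepsilon_N,\E}(t_*)-F_N^{\varepsilon_N,\I}(t_*)=\frac{m}{3m}=\frac13 .
\]
This gives \eqref{eq:bounded-gap}, with equality rather than mere convergence for large $N$. No condition relating $\varepsilon_N$ to $N$ is needed beyond $\varepsilon_N\to0$. The argument uses only the fact that kernels are compactly supported inside $(0,4\varepsilon)$, which holds for the families \eqref{eq:e-first-kernels}--\eqref{eq:i-first-kernels}.
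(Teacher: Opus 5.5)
Your proposal is correct and follows the paper's proof: disjoint $E\to C\leftarrow I$ motifs with weights $+a,-b$, zero drift, sources at threshold, and targets at $x$ with $x+a-b<\theta<x+a$, combined with Lemma~\ref{lem:single-target-order-paper} once $4\varepsilon_N<\min\{t_*,\rho_*\}$. There are two minor differences: you restrict to $N=3m$, whereas the paper uses $\lfloor N/3\rfloor$ motifs padded with isolated subthreshold vertices to cover every $N$ (so its gap is $\lfloor N/3\rfloor/N\to1/3$), and you spell out the behavior on $(4\varepsilon_N,t_*]$, which the paper leaves implicit.
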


\begin{proof}
Let $n_N=\lfloor N/3\rfloor$ and form $n_N$ disconnected motifs
\[
 E_\ell\longrightarrow C_\ell\longleftarrow I_\ell,
 \qquad \ell=1,\dots,n_N,
\]
leaving other nodes isolated.  Use weights $+a$ and $-b$, initialize each
source at threshold and each target at $x$, and choose
$x+a-b<\theta<x+a$.  Set all drifts to zero and initialize the isolated
vertices strictly below threshold.  Sources fire at time zero.  By
Lemma~\ref{lem:single-target-order-paper}, each target fires once only in
the E-first system.  Once $4\varepsilon_N<\min\{t_*,\rho_*\}$, the count gap
is $n_N/N\to1/3$.  Targets have in-degree two, sources have out-degree one,
and source types determine edge signs.
\end{proof}

A later macroscopic probe can read the local choice after the first reset.
The discrepancy is therefore not confined to the vanishing window in which
the two pulse batches arrive.

\begin{proposition}[Persistence through reset]
\label{prop:repeated-reset}
Fix $\rho>0$ and $T_P>\rho$.  There are uniformly bounded-degree,
Dale-compatible networks with the same initial data in both orderings such
that, for every $t_*>T_P$ and every $\varepsilon_N\downarrow0$,
\begin{equation}
 F_N^{\varepsilon_N,\E}(\rho/2)
 -F_N^{\varepsilon_N,\I}(\rho/2)\longrightarrow\frac14,
 \qquad
 F_N^{\varepsilon_N,\E}(t_*)
 -F_N^{\varepsilon_N,\I}(t_*)\longrightarrow\frac12.
 \label{eq:repeated-reset-gaps}
\end{equation}
For all sufficiently large $N$ along the stated sequence, every nonisolated
target has fired twice by $t_*$ under E-first and has not fired under
I-first.
\end{proposition}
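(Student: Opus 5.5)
The plan is to augment the bounded-degree motif of Proposition~\ref{prop:bounded-degree} with one extra \emph{probe} vertex per motif. The probe fires at the deterministic time $T_P$ in both orderings and delivers a second excitatory pulse to the target. The parameters are chosen so that this pulse crosses threshold from the reset level $v_R$ but not from the subthreshold value $x+a-b$. Take $N=4m$ (or leave the remaining $N-4\lfloor N/4\rfloor$ vertices isolated and strictly subthreshold with zero drift). Form $n_N=\lfloor N/4\rfloor$ disjoint motifs
\[
 E_\ell\longrightarrow C_\ell\longleftarrow I_\ell,\qquad P_\ell\longrightarrow C_\ell,
\]
with $E_\ell,P_\ell$ of type $\E$ and $I_\ell$ of type $\I$. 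The weights are $+a$, $-b$ and $+a'$ respectively, and all refractory periods equal $\rho$. Dale's law and bounded degree (in-degree $3$, out-degree $1$) are immediate.

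The parameters are fixed as follows. Choose $x<\theta$ with $x+a-b<\theta<x+a$, as in Lemma~\ref{lem:single-target-order-paper}. Then choose $v_R\in(x+a-b,\theta)$ and $a'>0$ such that
\[
 x+a-b+a'<\theta\le v_R+a'.
\]
This is possible because $v_R>x+a-b$. Initialize $E_\ell,I_\ell$ at threshold and $C_\ell$ at $x$, all with zero drift. Initialize $P_\ell$ at some $x_P<\theta$ with constant drift $b_P\equiv(\theta-x_P)/T_P$. Since $P_\ell$ receives no input, it fires exactly at $T_P$ in both systems and never again before $t_*$ unless $t_*\ge T_P+\rho$. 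That case is harmless, because the probe counts agree in the two orderings regardless.

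Next I track the target in each ordering for $N$ large, meaning $4\varepsilon_N<\rho/2$ and $T_P+4\varepsilon_N<t_*$.
\begin{itemize}
\item \emph{First window.} On $[0,4\varepsilon_N]$, Lemma~\ref{lem:single-target-order-paper} shows that $C_\ell$ fires once, by time $2\varepsilon_N$, under E-first and never under I-first. Under I-first its voltage then rests at $x+a-b$. Under E-first it is clamped at $v_R$ until at most $2\varepsilon_N+\rho<T_P+\varepsilon_N$, and afterwards it stays at $v_R$ because there is no drift and no remaining input.
\item \emph{Probe window.} The probe pulse is supported in $[T_P+\varepsilon_N,T_P+4\varepsilon_N]$ (the $\E$-kernel is shifted in either family, and only its support matters), so it reaches an active target in both systems. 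Monotone increase then gives a crossing from $v_R$ under E-first. Under I-first the voltage rises only to $x+a-b+a'<\theta$, so there is no crossing.
\end{itemize}

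Counting per motif: at time $\rho/2<T_P$ the sources contribute equally, the probe has not fired, and the target contributes $1$ versus $0$. This gives $F^{\E}-F^{\I}=n_N/N\to1/4$. At $t_*>T_P$ the targets contribute $2$ versus $0$, while sources and probes agree, giving $2n_N/N\to1/2$.

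The main obstacle is the timing bookkeeping around the second window. One must check three things: the E-first target is released before any probe mass arrives, so that no probe mass is lost during the clamp; the probe's own firing time is independent of the ordering; and the "fires at most once per window" claim still holds. All three reduce to $2\varepsilon_N+\rho<T_P+\varepsilon_N$ and $4\varepsilon_N<\rho$, which hold eventually since $T_P>\rho$ and $\varepsilon_N\downarrow0$. A single reset per window suffices here, so the argument never needs to control re-excitation by tails.
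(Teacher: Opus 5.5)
Your motif, parameter inequalities, and timing conditions match the paper's appendix, but your choice of probe breaks the argument for large $t_*$.

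With constant drift $b_P\equiv(\theta-x_P)/T_P>0$, the probe does not fire only once. After its spike at $T_P$ it is reset to some level $v_{R,P}<\theta$, released at $T_P+\rho$, and climbs back to threshold. It therefore fires again at $T_P^{(2)}=T_P+\rho+(\theta-v_{R,P})/b_P$, and then periodically. You dismiss this because the probe's own counts agree in the two orderings, but each later probe spike sends another $+a'$ pulse to $C_\ell$.

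Under E-first, the target's second firing occurs in $(T_P+\varepsilon_N,T_P+2\varepsilon_N)$. It is therefore back at $v_R$ by $T_P+2\varepsilon_N+\rho<T_P^{(2)}+\varepsilon_N$ for large $N$. Since $v_R+a'\ge\theta$, it fires a third time on the second probe pulse. So for every $t_*>T_P^{(2)}$, the final clause of the proposition (two firings under E-first, none under I-first) is false.

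The gap limit can fail as well. Your constraints permit $x+a-b+2a'<\theta$, for instance $\theta=1$, $x+a-b=0$, $v_R=9/10$, $a'=1/5$. Then, for $t_*$ between the second and third probe spikes, the per-motif count difference is $3$, so $F_N^{\varepsilon_N,\E}(t_*)-F_N^{\varepsilon_N,\I}(t_*)\to3/4$. The network must be fixed before $t_*$ is chosen, so no tuning of $x_P$ or $v_{R,P}$ pushes the refiring beyond every $t_*$.

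What is missing is a probe that fires exactly once for all time. The paper takes $b_P(z,\alpha)=\lambda(z-r_P)$ with reset level $r_P$ and initial value $z_P^0=r_P+(\theta_P-r_P)e^{-\lambda T_P}$. The voltage then reaches $\theta_P$ exactly at $T_P$, is reset to the rest point $r_P$, and stays there, so only one probe pulse is ever emitted. With any such drift vanishing at the probe's reset level, the rest of your argument goes through and coincides with the paper's proof. This includes your nonstrict $\theta\le v_R+a'$, which is harmless given zero target drift and compactly supported kernels.
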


\begin{proof}
The explicit four-node motif and its verification are given in
Appendix~\ref{app:repeated-reset}.  The first signed batch separates the
states as above.  After refractory release, a common delayed excitatory
probe crosses threshold only from the E-first reset state, creating a
second population-scale count difference.
\end{proof}

\begin{remark}[Data required for selection]
Both regularizations in Theorem~\ref{thm:random-block} have the same
componentwise type-marked weak kernel limit and use the same realized graph.
Equation~\eqref{eq:random-gap} therefore excludes any macroscopic firing
limit determined only by those data on this class.  An instantaneous model
may still be well posed if its state records the rescaled E/I order or an
equivalent collision resolver.
\end{remark}

\section{Exact finite-size anatomy of the nonselection mechanism}
\label{sec:numerics}

The results above admit finite-dimensional calculations that retain the
threshold--reset mechanism instead of replacing it by a continuous surrogate.
We use them for two purposes: to resolve the finite-size transition in
Theorem~\ref{thm:random-block}, and to display the post-reset readout in
Proposition~\ref{prop:repeated-reset}.  Event times are obtained from the
smooth cumulative kernel by root finding; no time grid decides whether a
threshold is crossed.

\subsection{From the local selection region to the random-block limit}

Fix $x=0.2$ and $\theta=1$.  If a target receives aggregate excitatory and
inhibitory masses $(A,B)$, Lemma~\ref{lem:single-target-order-paper} gives the
selection region
\begin{equation}
 A\ge0.8,
 \qquad A-B<0.8.
 \label{eq:numerical-local-selection-region}
\end{equation}
The nominal point $(a,b)=(1.1,0.6)$ lies strictly inside this region.  The
box $|A-a|\le0.1$, $|B-b|\le0.1$ also lies inside, matching the concentration
event used in the proof of Theorem~\ref{thm:random-block}.

For a random-block target, let
\[
 D_\E,D_\I\stackrel{\mathrm{ind}}{\sim}
 \operatorname{Bin}\!\left(\frac N4,\frac{2q}{N}\right).
\]
The exact finite-size separation probability is
\begin{equation}
 r_{N,q}(a,b)
 =\Prob\!\left(
 x+\frac{2a}{q}D_\E\ge\theta,
 \quad
 x+\frac{2a}{q}D_\E-\frac{2b}{q}D_\I<\theta
 \right).
 \label{eq:numerical-random-block-probability}
\end{equation}
Consequently, the population gap has the exact law
\begin{equation}
 N\bigl(F_N^{\varepsilon_N,\E}(t_*)
       -F_N^{\varepsilon_N,\I}(t_*)\bigr)
 \sim\operatorname{Bin}(N/2,r_{N,q}),
 \label{eq:numerical-random-block-law}
\end{equation}
once $4\varepsilon_N<\min\{t_*,\rho_*\}$.  Figure~\ref{fig:random-block-selection}
evaluates this law directly.  The finite random graph smooths both boundaries
of \eqref{eq:numerical-local-selection-region}; increasing $q$ sharpens them.
Along $q_N=\lfloor N^{2/3}\rfloor$ at the nominal point, the exact mean gap
approaches $1/2$ while its graph-to-graph interval collapses.

\begin{figure}[t]
 \centering
 \includegraphics[width=\textwidth]{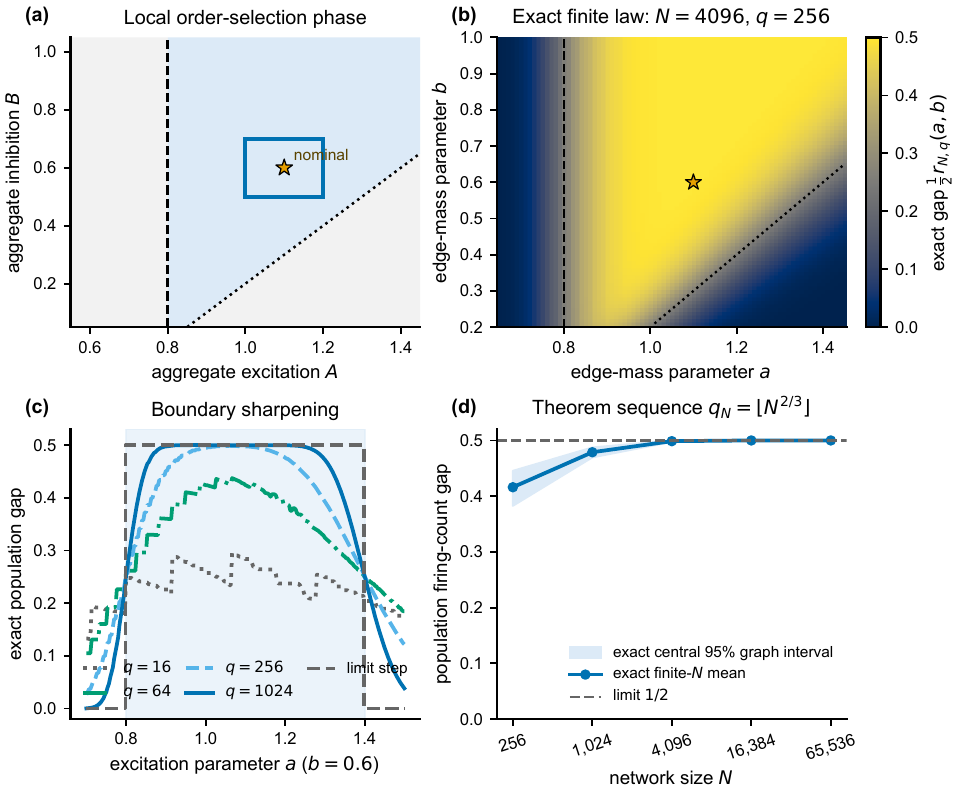}
 \caption{Local and random-block selection.  (a) The exact local region in
 $(A,B)$, with the nominal point and the concentration box used in the random
 proof.  (b) The exact finite-size mean population gap
 $\tfrac12r_{N,q}(a,b)$ for $N=4096$ and $q=256$; black curves are the local
 boundaries.  (c) At $b=0.6$ and $N=65536$, increasing $q$ sharpens the two
 transitions toward the limiting half-height window $0.8\le a<1.4$.
 (d) Exact mean and central $95\%$ interval of the
 $\operatorname{Bin}(N/2,r_{N,q})/N$ graph law along
 $q_N=\lfloor N^{2/3}\rfloor$ at $(a,b)=(1.1,0.6)$; the interval contains no
 Monte Carlo uncertainty.}
 \label{fig:random-block-selection}
\end{figure}

\subsection{A common probe reads the branch after reset}

We next use the four-node motif from Appendix~\ref{app:repeated-reset}.  Set
\[
 \theta=1,
 \quad x=0,
 \quad a=\frac32,
 \quad b=1,
 \quad v_R=\frac34,
 \quad c=\frac38.
\]
The first signed batch leaves the I-first target at
$y=x+a-b=1/2$, whereas the E-first target fires and is later released at
$v_R=3/4$.  A common delayed probe then separates these states because
\begin{equation}
 y+c=\frac78<1<\frac98=v_R+c.
 \label{eq:numerical-probe-margin}
\end{equation}

For the normalized bump used in Section~\ref{sec:local-order}, let $K$ be its
cumulative mass.  With $\varepsilon=0.05$, $\rho=0.5$, probe time $T_P=1$,
and observation time $t_*=1.3$, both E-first crossings satisfy
$K(s_*)=2/3$, where $s_*=0.5327246820$.  They occur at
\[
 t_1=0.0766362341,
 \qquad
 t_2=1.0766362341.
\]
The temporal separation condition is strict:
\[
 4\varepsilon=0.2
 <\min\{\rho/2,T_P-\rho,t_*-T_P\}=0.25.
\]

If $n_N=\lfloor N/4\rfloor$ motifs are used, the population count difference
is therefore exactly
\begin{equation}
 \Delta F_N(t)=
 \begin{cases}
  0,&t<t_1,\\
  n_N/N,&t_1\le t<t_2,\\
  2n_N/N,&t\ge t_2.
 \end{cases}
 \label{eq:numerical-probe-staircase}
\end{equation}
The probe effect is not confined to the nominal parameter point.  Away from
the two threshold-equality boundaries, the final target-count difference is
\begin{equation}
 \Delta M_C
 =1+\one_{\{v_R+c>\theta\}}-\one_{\{y+c>\theta\}}.
 \label{eq:numerical-probe-phase}
\end{equation}
Thus $v_R+c>\theta$ and $y+c<\theta$ define an open region in which the
probe increases the per-target discrepancy from one firing to two.

\begin{figure}[t]
 \centering
 \includegraphics[width=\textwidth]{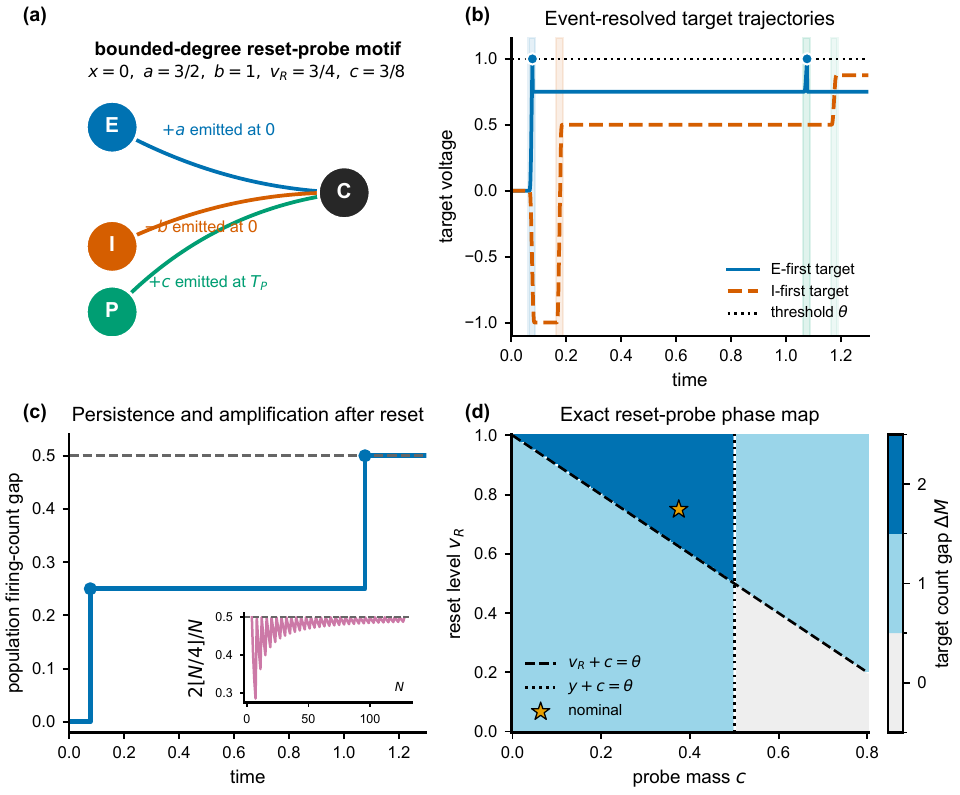}
 \caption{A bounded-degree readout after reset.  (a) The four-node motif;
 $E$ and $I$ deliver the first signed batch, while $P$ delivers the common
 delayed probe.  (b) Event-resolved target voltages: E-first crosses twice,
 while I-first ends below threshold after both batches.  Shading marks pulse
 supports, and threshold events are root-solved independently of the plotting
 mesh.  (c) For the displayed $N=64$ population, the exact gap rises from
 $n_N/N$ after the first batch to $2n_N/N$ after the probe; the inset records
 the finite-$N$ floor effect.
 (d) Exact reset--probe phase diagram in $(c,v_R)$ for $y=1/2$; the nominal
 point lies inside the open two-firing-difference region, and broken black curves
 mark non-robust threshold equalities.}
 \label{fig:reset-probe-persistence}
\end{figure}

The archived workflow records the analytic inputs, root residuals, finite
binomial sums, phase grids, software versions, and SHA-256 hashes.  A clean
verification run regenerates every table and figure and requires byte-level
agreement.  These calculations expose the finite-size structure of the
proved constructions; they do not replace the proofs or supply biological
validation.

\clearpage
\section{A fixed-kernel mean-field comparison before grazing}
\label{sec:fixed-kernel}

The preceding counterexamples let a positive delay collapse with $N$.  Fix
instead one smooth delayed kernel family.  Sparse averaging is stable in the
pre-grazing regime described below; the result does not address the
collapsing-kernel random block setting of Theorem~\ref{thm:random-block}.

\subsection{Finite-class delayed mean field}

Let $\mathcal Q=\{1,\dots,Q\}$ be a finite class set.  A class $q$ fixes a
source type $\tau(q)\in\{\E,\I\}$, drift $b_q$, threshold $\theta_q$,
reset level $v_{R,q}<\theta_q$, refractory period $\rho_q\ge\rho_*$, and
initial data $x_q^0<\theta_q$, $a_q^0\ge\rho_q$.  On every finite age interval,
$b_q$ is jointly Lipschitz in voltage and age and has at most linear voltage
growth.  Set
\begin{equation}
 J_\E=\alpha_\E>0,
 \qquad J_\I=-\alpha_\I<0.
 \label{eq:type-amplitudes}
\end{equation}
Fix nonnegative normalized kernels $k_\tau\in C_c^1((0,\infty))$ with
$\int_0^\infty k_\tau(s)\,ds=1$, and put
\begin{equation}
 \delta_k:=\min_{\tau\in\{\E,\I\}}\inf\supp k_\tau>0.
 \label{eq:positive-delay}
\end{equation}

At size $N$, node $i$ has class $q_i^N$.  Write
\begin{equation}
 \pi_q^N:=\frac1N\#\{i:q_i^N=q\},
 \qquad \pi_q^N\longrightarrow\pi_q>0,
 \qquad \sum_q\pi_q=1.
 \label{eq:class-proportions}
\end{equation}
For a directed adjacency matrix $A_N=(a_{ij}^N)\in\{0,1\}^{N\times N}$ with
zero diagonal and an expected-degree scale $\lambda_N>0$, define
\begin{equation}
 w_{ij}^N=\frac{a_{ij}^N}{\lambda_N}J_{\tau(q_j^N)}.
 \label{eq:fixed-kernel-weights}
\end{equation}
The finite network follows Section~\ref{sec:model} with the fixed kernels
$k_\tau$.

The candidate delayed mean field has one trajectory
$Z_q=(X_q,A_q,M_q)$ per class.  Its emitted trace and common input are
\begin{equation}
 H_q(t):=\int_{[0,t)}k_{\tau(q)}(t-s)\,dM_q(s),
 \qquad
 I(t):=\sum_{q=1}^Q\pi_qJ_{\tau(q)}H_q(t).
 \label{eq:class-mean-field}
\end{equation}
While active, class $q$ solves
$\dot X_q=b_q(X_q,A_q)+I$ and otherwise follows the same reset and clamp
protocol.

\begin{lemma}[Delayed class well-posedness]
\label{lem:class-wellposedness}
The class system has a unique global trajectory and satisfies the firing
bound \eqref{eq:firing-bound} with $\rho_*$.
\end{lemma}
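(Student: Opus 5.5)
We mirror the event-driven argument of Theorem~\ref{thm:finite-wellposedness-paper}, regarding the class system as a finite network with $Q$ nodes whose input is the weighted combination $I=\sum_q\pi_qJ_{\tau(q)}H_q$ instead of a sum over edges. The key structural fact is the positive delay $\delta_k>0$ from \eqref{eq:positive-delay}. On each slab $[n\delta_k,(n+1)\delta_k)$ the traces $H_q(t)$ depend only on firings in $[0,t-\delta_k]\subset[0,n\delta_k)$. Hence once the history on earlier slabs is fixed, $I$ is a known function on the current slab.

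We argue by induction on slabs. On $[0,\delta_k)$ no class fires at time zero, since $x_q^0<\theta_q$, so $I\equiv0$ there. Suppose the trajectory is constructed uniquely on $[0,n\delta_k)$ with finitely many firings. Then on the next slab $I$ is a finite sum of translated $C_c^1$ kernels, weighted by the constants $\pi_qJ_{\tau(q)}$, and is therefore continuous and bounded. Each active class solves $\dot X_q=b_q(X_q,A_q)+I(t)$ with $\dot A_q=1$. On bounded age ranges $b_q$ is Lipschitz in voltage and has at most linear growth, so by Picard--Lindel\"of and Gr\"onwall solutions exist uniquely and do not blow up before the slab ends. We then run the event protocol inside the slab: integrate until the first threshold hit or scheduled release, reset all simultaneous hitters to $v_{R,q}$, clamp them for $\rho_q$, and restart released classes. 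Spikes emitted inside the slab do not change $I$ on that slab, since they influence the input only after a further delay $\ge\delta_k$. This makes the within-slab construction fully explicit, and uniqueness follows because every step is forced.

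The main point is to exclude accumulation of events inside a slab, which would stall the construction before the slab ends. As in Theorem~\ref{thm:finite-wellposedness-paper}, consecutive firings of class $q$ are separated by at least $\rho_q\ge\rho_*$. This holds because after a firing the class is clamped on $[T,T+\rho_q)$ and can only hit threshold again after release. The separation gives $M_q(T)\le 1+\lfloor T/\rho_q\rfloor\le 1+\lfloor T/\rho_*\rfloor$, which is \eqref{eq:firing-bound}. With $Q$ classes, each slab therefore contains finitely many firings. Releases are in bijection with firings, so they are finite as well.

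The slabs have fixed length $\delta_k$, so the induction covers $[0,\infty)$ and yields a unique global trajectory. One subtlety is an active branch that reaches threshold exactly at release time. We handle it by the same first-hit rule as in the finite network, and the right-continuous conventions of Section~\ref{sec:model} make this unambiguous.
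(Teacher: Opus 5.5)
Your proof is correct and takes essentially the same route as the paper: a causal slab-by-slab construction, where the positive delay $\delta_k$ makes the common input $I$ known on each slab, followed by the event construction and refractory separation of Theorem~\ref{thm:finite-wellposedness-paper} for existence, uniqueness, and the count bound. The paper uses closed slabs of length $h<\delta_k$ and cites that theorem directly instead of re-running the event argument; your closing remark about a threshold hit exactly at release is vacuous, since a released class sits at $v_{R,q}<\theta_q$.
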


\begin{proof}
Choose $h\in(0,\delta_k)$.  On $[0,h]$, no spike emitted at nonnegative
time can yet contribute, so each class follows an event ODE with known
input.  Once the trajectories are known through $mh$, the input on
$[mh,(m+1)h]$ depends only on spikes before
$(m+1)h-\delta_k<mh$ and is already known.  Iteration and
Theorem~\ref{thm:finite-wellposedness-paper} give global existence,
uniqueness, and the count bound.
\end{proof}

\begin{definition}[Regular pre-grazing horizon]
\label{def:regular-horizon-paper}
A time $T>0$ is regular for the class solution when it is neither a firing
nor a release time of any class and there is $\gamma_T>0$ such that every
class firing time $s\in(0,T)$ is transversal:
\begin{equation}
 b_q(\theta_q,A_q(s-))+I(s)\ge\gamma_T.
 \label{eq:transversality}
\end{equation}
The first grazing time is the first threshold contact at which this lower
bound fails.
\end{definition}

For a signed measure $\nu$ on $[0,T]$, set
\begin{equation}
 \|\nu\|_{\BL,T}^*
 :=\sup\left\{
 \left|\int\varphi\,d\nu\right|:
 \|\varphi\|_\infty+\Lip(\varphi)\le1
 \right\}.
 \label{eq:bl-norm}
\end{equation}
Let $\mathcal R_q[J]=(X_q[J],A_q[J],M_q[J])$ be the clamped response to a
continuous current $J$, and let $H_q[J]=k_{\tau(q)}*dM_q[J]$.  Define
\begin{equation}
\begin{aligned}
 \mathfrak d_{q,T}(J,\widetilde J):={}&
 \int_0^T\bigl(
 |X_q[J]-X_q[\widetilde J]|\wedge1
 +|A_q[J]-A_q[\widetilde J]|\wedge1\bigr)\,dt\\
 &+\|H_q[J]-H_q[\widetilde J]\|_{L^1(0,T)}
 +\|dM_q[J]-dM_q[\widetilde J]\|_{\BL,T}^*.
\end{aligned}
\label{eq:hybrid-metric}
\end{equation}

\begin{lemma}[Transversal response stability]
\label{lem:response-stability-paper}
If $T$ is regular, there is $C_{\rm hit}<\infty$ such that, for every class
$q$ and every continuous current $J$,
\begin{equation}
 \mathfrak d_{q,T}(J,I)
 \le C_{\rm hit}\|J-I\|_{L^1(0,T)}.
 \label{eq:response-stability}
\end{equation}
For sufficiently small current error, the firing counts agree and matched
firing times differ by at most
$C_{\rm hit}\|J-I\|_{L^1(0,T)}$.
\end{lemma}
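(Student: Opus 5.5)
The plan is to reduce \eqref{eq:response-stability} to a single-node perturbation argument. The map $J\mapsto\mathcal R_q[J]$ involves no feedback: the current is prescribed, so $X_q[J]$ solves a one-dimensional clamped event ODE driven by $J$. I will split according to the size of $\|J-I\|_{L^1(0,T)}$.

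\textbf{Large errors.} Suppose $\|J-I\|_{L^1(0,T)}\ge\delta_0$, with $\delta_0>0$ chosen below. Every term of $\mathfrak d_{q,T}$ is bounded uniformly in $J$:
\begin{itemize}
\item the two truncated integrals contribute at most $2T$;
\item by \eqref{eq:firing-bound}, both responses fire at most $1+\lfloor T/\rho_*\rfloor=:n_*$ times on $[0,T]$;
\item since $k_{\tau(q)}$ has unit mass, $\|H_q[J]\|_{L^1(0,T)}\le n_*$;
\item since $\|\varphi\|_\infty\le1$, each dual BL norm is at most $n_*$.
\end{itemize}
Hence $\mathfrak d_{q,T}\le 2T+4n_*\le(2T+4n_*)\delta_0^{-1}\|J-I\|_{L^1}$. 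The whole difficulty is therefore the small-error regime.

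\textbf{Small errors: structure of the reference path.} Let $0<s_1<\dots<s_n<T$ be the firing times of class $q$ under $I$; by Lemma~\ref{lem:class-wellposedness} there are finitely many. The release times are $s_k+\rho_q$, and by regularity none of them, and none of the $s_k$, equals $T$.

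Fix a small $r>0$. Near each $s_k$: since $I$ and $b_q$ are continuous and \eqref{eq:transversality} holds, the free slope $b_q(X,A)+I$ stays $\ge\gamma_T/2$ on $[s_k-r,s_k+r]$ for $X$ within $r$ of $\theta_q$.

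Away from the firing times: on every active segment with the $r$-neighbourhoods of the $s_k$ removed, the reference voltage stays $\le\theta_q-\kappa$ for some $\kappa>0$. The active segments are $[0,s_1]$ and $[s_k+\rho_q,s_{k+1}]$, with the last running up to $T$. The bound holds by compactness, because the path touches $\theta_q$ only at the $s_k$. In particular $x_q^0<\theta_q$, and a released path starts at $v_{R,q}<\theta_q$.

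\textbf{Small errors: induction over events.} Let $\Delta:=\|J-I\|_{L^1}$ and $L$ be the Lipschitz constant of $b_q$ on the relevant age range. I proceed event by event, maintaining the bound $|s_k^J-s_k|\le C_k\Delta$.
\begin{enumerate}
\item \emph{Between events.} Gronwall gives $|X_q[J](t)-X_q[I](t)|\le e^{LT}(\Delta+L\,|\text{previous time shift}|)$. The age term enters because $b_q$ is Lipschitz in age and the ages differ by exactly the previous firing-time shift. If $\Delta$ is small, this is $<\kappa$, so there are no spurious crossings away from the $s_k$.
\item \emph{Near $s_k$.} The lower bound $\gamma_T/2$ on the slope plays the role of an implicit-function estimate: the perturbed path crosses exactly once in $[s_k-r,s_k+r]$, and $|s_k^J-s_k|\le 2(\gamma_T)^{-1}\sup|X_q[J]-X_q[I]|$ there.
\item \emph{At release.} Both paths restart at $v_{R,q}$ with release times shifted by the same amount, so the induction continues.
\end{enumerate}
This gives equal firing counts and matched firing times within $C\Delta$, which is the second claim of the lemma.

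\textbf{Small errors: summing the metric.} The voltage and age terms are $O(\Delta)$ off the windows of length $O(\Delta)$ around the shifted firing and release times. On those windows the integrands are bounded by $1$, which is why the truncation $\wedge1$ is used. For the trace, the $C^1$ kernel gives
\[
\|k(\cdot-s)-k(\cdot-s')\|_{L^1}\le\|k'\|_{L^1}|s-s'|.
\]
For the counting measure, $|\varphi(s)-\varphi(s')|\le|s-s'|$ for every admissible $\varphi$. Summing over at most $n_*$ events yields the constant $C_{\rm hit}$. This constant depends on $T,\gamma_T,\kappa,r,L$, but not on $J$.

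\textbf{Main obstacle.} The step I expect to need the most care is the uniform margin $\kappa$ excluding spurious or missing crossings. In particular, on a segment starting at a release, the perturbed release time differs from the reference one, so the two paths must be compared on slightly misaligned intervals. My plan is to compare them after time-shifting by the release discrepancy and absorb the discrepancy into the age-Lipschitz Gronwall term.
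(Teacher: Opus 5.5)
Your proposal is correct and follows essentially the paper's route. Large errors are absorbed by the uniform firing bound. For small errors, an event-by-event induction combines Gronwall, the transversal crossing speed of the reference free branch, and the reset gap, and the kernel translation estimate and matched atoms then close the trace and bounded-Lipschitz terms.

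The one step to tighten is your post-release Gronwall bound, which omits the offset accumulated on the release-mismatch layer. Because $J$ has no pointwise bound, the estimate that handles this offset is $\int_{\text{mismatch}}|J|\,dt\le\|J-I\|_{L^1(0,T)}+\|I\|_\infty d_r$; the paper uses it directly, and with it no time shift is needed.
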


\begin{proof}
The reference event list is finite.  Away from its firing times, active
reference branches have a positive threshold gap.  Near a firing time,
\eqref{eq:transversality} gives a neighborhood in which the free branch
crosses at speed at least $\gamma_T/2$.  Before the first event, Gronwall's
inequality controls the free-branch error by
$C\|J-I\|_{L^1}$, and the crossing-speed bound controls the displacement of
the first firing time.

Inductively suppose firing times $s_1,\dots,s_r$ have been matched with
$\widetilde s_1,\dots,\widetilde s_r$ and write
$d_\ell=|s_\ell-\widetilde s_\ell|$.  The reset and release times after
firing $r$ are displaced by $d_r$.  Voltage error may be order one on this
phase-mismatch layer, but the layer has length $d_r$; after both paths are
released, the mismatch contribution obeys
\[
 \int_{\text{mismatch}}|J(t)|\,dt
 \le \|J-I\|_{L^1(0,T)}+\|I\|_\infty d_r.
\]
Gronwall then gives a free-branch error bounded by
\[
 C\left(\|J-I\|_{L^1}+\sum_{\ell=1}^r d_\ell\right).
\]
The positive reset gap excludes an extra firing inside the mismatch layer,
and transversality yields the same bound for $d_{r+1}$.  Finite induction
therefore controls all event displacements and the integrated state error.
Finally, the translation estimate
\[
 \|k_\tau(\cdot-s)-k_\tau(\cdot-\widetilde s)\|_{L^1}
 \le\|k_\tau'\|_{L^1}|s-\widetilde s|
\]
controls emitted traces, while matched atoms control the bounded-Lipschitz
firing measure.  Large current errors are absorbed by the uniform firing
bound after enlarging the constant.
\end{proof}

\subsection{A mixing estimate uniform over graph-dependent histories}

For a real matrix $B$, let
$\|B\|_{\infty\to1}:=\sup_{\|u\|_\infty\le1}\|Bu\|_1$ and let $P_\tau^N$
project onto nodes of source type $\tau$.  Define
\begin{equation}
 \Delta_N:=
 \max_{\tau\in\{\E,\I\}}\frac1N
 \left\|
 \left(\frac{A_N}{\lambda_N}-\frac{\one\one^\top}{N}\right)P_\tau^N
 \right\|_{\infty\to1}.
 \label{eq:mixing-defect}
\end{equation}
The supremum is taken over all vectors and therefore applies even when the
firing history is graph-dependent.

\begin{lemma}[Graph-dependent input bound]
\label{lem:graph-input-paper}
Let $M_1,\dots,M_N$ be arbitrary firing counts satisfying
\eqref{eq:firing-bound} through $T$, and set
$H_j=k_{\tau(q_j^N)}*dM_j$.  Then
\begin{equation}
 \frac1N\sum_{i=1}^N\int_0^T
 \left|
 \sum_{j=1}^N J_{\tau(q_j^N)}
 \left(\frac{a_{ij}^N}{\lambda_N}-\frac1N\right)H_j(t)
 \right|dt
 \le C_{T,\mathrm{data}}\Delta_N,
 \label{eq:graph-input-bound}
\end{equation}
where the constant may depend on the fixed class, kernel, coupling, and
refractory data, but is independent of the graph and firing histories.
\end{lemma}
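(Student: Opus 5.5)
We prove Lemma~\ref{lem:graph-input-paper} by splitting the sum over sources by source type, rewriting the inner sum at each fixed time as a matrix--vector product, and then applying the definition \eqref{eq:mixing-defect} pointwise in time. The key observation is that $\Delta_N$ is a supremum over all bounded vectors. Hence it does not matter that $H(t)$ depends on the graph, provided we have a deterministic sup-norm bound on $H_j(t)$.

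\textbf{Step 1: uniform bound on the traces.} For every $j$ and every $t\in[0,T]$,
\[
 0\le H_j(t)\le \|k_{\tau(q_j^N)}\|_\infty\, M_j(T)\le K_\infty\Bigl(1+\Bigl\lfloor\tfrac{T}{\rho_*}\Bigr\rfloor\Bigr)=:C_H,
\]
where $K_\infty:=\max_\tau\|k_\tau\|_\infty<\infty$ because $k_\tau\in C_c^1$. This uses only \eqref{eq:firing-bound}, so $C_H$ is independent of the graph and of the histories. Also, each $H_j$ is a finite sum of translates of a continuous kernel, hence measurable (indeed piecewise continuous) in $t$. This makes the time integral below legitimate.

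\textbf{Step 2: pointwise matrix bound.} Fix $t$ and set $u(t)=(H_j(t))_j$. Let $B_N:=A_N/\lambda_N-\one\one^\top/N$. Since $J_{\tau(q_j^N)}$ depends only on the type of $j$, the inner sum equals
\[
 \sum_{\tau\in\{\E,\I\}}J_\tau\bigl(B_NP_\tau^N u(t)\bigr)_i .
\]
By the triangle inequality,
\[
 \frac1N\sum_i\Bigl|\sum_j\cdots\Bigr|\le\sum_\tau|J_\tau|\,\frac1N\bigl\|B_NP_\tau^N u(t)\bigr\|_1 .
\]
Since $\|u(t)\|_\infty\le C_H$, homogeneity of the $\infty\to1$ norm gives
\[
 \frac1N\bigl\|B_NP_\tau^N u(t)\bigr\|_1\le C_H\Delta_N .
\]

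\textbf{Step 3: integrate in time.} Integrating over $[0,T]$ and using Tonelli to exchange the sum over $i$ with the integral, we obtain \eqref{eq:graph-input-bound} with
\[
 C_{T,\mathrm{data}}=T\,(\alpha_\E+\alpha_\I)\,K_\infty\bigl(1+\lfloor T/\rho_*\rfloor\bigr).
\]

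The only delicate point is the graph-dependence of the histories, which prevents a concentration argument applied to a fixed vector. This is handled entirely by the sup over $\|u\|_\infty\le1$ in \eqref{eq:mixing-defect} combined with the deterministic bound from Step~1. No other step presents an obstacle.
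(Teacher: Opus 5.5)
Your proposal is correct and follows the paper's proof essentially step for step. You bound $0\le H_j(t)\le K_T\max_\tau\|k_\tau\|_\infty$ using the refractory firing bound, apply the definition of $\Delta_N$ at each fixed time to the two bounded type-projected vectors $P_\tau^N(H_j(t))_j$, and then integrate in $t$ and sum over $\tau$; your explicit constant $T(\alpha_\E+\alpha_\I)K_\infty(1+\lfloor T/\rho_*\rfloor)$ is a valid choice of the paper's unspecified $C_{T,\mathrm{data}}$.
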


\begin{proof}
The refractory bound gives
$0\le H_j(t)\le K_T\max_\tau\|k_\tau\|_\infty$, where
$K_T=1+\lfloor T/\rho_*\rfloor$.  At each time, apply
\eqref{eq:mixing-defect} separately to the two bounded vectors
$P_\tau^N(H_j(t))_j$, then integrate and sum over $\tau$.
\end{proof}

\subsection{Comparison and the Erd\H{o}s--R\'enyi consequence}

Let $I_i^N$ denote the current actually received by node $i$ and let $I$ be
the class mean-field current in \eqref{eq:class-mean-field}.

\begin{theorem}[Fixed-kernel comparison before grazing]
\label{thm:fixed-kernel-comparison}
Fix the kernels and class data above and let $T$ be a regular horizon.  There
is $C_{T,\mathrm{data}}<\infty$ such that, for all sufficiently large $N$,
\begin{equation}
 \frac1N\sum_{i=1}^N\left[
 \mathfrak d_{q_i^N,T}(I_i^N,I)
 +\|I_i^N-I\|_{L^1(0,T)}
 \right]
 \le C_{T,\mathrm{data}}\left(
 \Delta_N+\sum_{q=1}^Q|\pi_q^N-\pi_q|
 \right).
 \label{eq:fixed-kernel-comparison}
\end{equation}
No smallness condition on $\alpha_\E+\alpha_\I$ is required.
\end{theorem}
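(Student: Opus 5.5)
We will prove the theorem by a time-slab bootstrap that exploits the positive delay \eqref{eq:positive-delay}, combining Lemma~\ref{lem:graph-input-paper} with the response stability Lemma~\ref{lem:response-stability-paper}. Fix $h\in(0,\delta_k)$ and let $0=t_0<t_1<\dots<t_L=T$ be a grid of mesh at most $h$. On each slab $[t_m,t_{m+1}]$, the current $I_i^N$ depends only on firings of other nodes before $t_{m+1}-\delta_k<t_m$. The mean-field current $I$ depends likewise only on class firings before $t_m$. So on each slab both currents are determined by already-controlled histories, and no fixed-point or smallness argument in $\alpha_\E+\alpha_\I$ is needed; this is why no smallness condition appears.

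\emph{Decomposition of the current error.} Write $q_j=q_j^N$ and let $H_j^N=k_{\tau(q_j)}*dM_j^N$ be the emitted trace of node $j$. Set $H_q=H_q[I]$ for the class traces. Then
\[
 I_i^N-I=\sum_j J_{\tau(q_j)}\Bigl(\tfrac{a_{ij}^N}{\lambda_N}-\tfrac1N\Bigr)H_j^N
 +\tfrac1N\sum_j J_{\tau(q_j)}\bigl(H_j^N-H_{q_j}\bigr)
 +\sum_q(\pi_q^N-\pi_q)J_{\tau(q)}H_q .
\]
\begin{itemize}
\item The first term, averaged over $i$ and integrated, is at most $C\Delta_N$ by Lemma~\ref{lem:graph-input-paper}. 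That lemma is uniform over graph-dependent histories, and the firing bound \eqref{eq:firing-bound} holds by Theorem~\ref{thm:finite-wellposedness-paper}.
\item The third term is at most $C\sum_q|\pi_q^N-\pi_q|$, because the $H_q$ are bounded by the refractory count bound.
\item The middle term does not depend on $i$. Its $L^1(0,t)$ norm is at most $C\,\frac1N\sum_j\|H_j^N-H_{q_j}\|_{L^1(0,t-\delta_k)}$, using the delay: the traces on $[0,t]$ are determined by spikes before $t-\delta_k$.
\end{itemize}

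\emph{Induction over slabs.} Let $E_m:=\frac1N\sum_i\bigl[\mathfrak d_{q_i,t_m}(I_i^N,I)+\|I_i^N-I\|_{L^1(0,t_m)}\bigr]$. Node $i$'s trajectory is exactly the clamped response $\mathcal R_{q_i}[I_i^N]$. The regularity of $T$ transfers to every earlier time, since regularity only fails at event times; where needed we perturb the $t_m$ slightly off firing and release times. Applying Lemma~\ref{lem:response-stability-paper} on $[0,t_{m+1}]$ gives
\[
 \|H_j^N-H_{q_j}\|_{L^1}\le \mathfrak d_{q_j,t_{m+1}}(I_j^N,I)\le C_{\rm hit}\|I_j^N-I\|_{L^1(0,t_{m+1})}.
\]
Combining this with the decomposition and the delay yields
\[
 E_{m+1}\le C\bigl(\Delta_N+\|\pi^N-\pi\|_1\bigr)+C\,E_m,
\]
where the last term uses that the middle current term on $[0,t_{m+1}]$ only sees traces on $[0,t_m]$. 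Iterating over the fixed finite number $L\approx T/h$ of slabs gives \eqref{eq:fixed-kernel-comparison} with a constant depending on $T$, $h$, the kernels and $C_{\rm hit}$. The phrase ``for all sufficiently large $N$'' covers $\pi_q^N>0$ and similar bookkeeping.

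\emph{Main obstacle.} The delicate point is that Lemma~\ref{lem:response-stability-paper} is a linear bound with a constant for the fixed reference $I$. We must make sure it is applied in the form $\mathfrak d\le C_{\rm hit}\|J-I\|_{L^1}$ for \emph{all} continuous $J$, including large deviations, so that averaging over $i$ is legitimate without a per-node smallness requirement. The lemma's last sentence (absorbing large errors into the constant via the uniform firing bound) provides exactly this, because $\mathfrak d$ is bounded by $C_T$ via the truncations $\wedge1$ and the refractory bound. A second subtlety is that the trace error of node $j$ on $[0,t_m]$ must be controlled by $\mathfrak d_{q_j,t_m}$ rather than by the horizon-$T$ quantity. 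We will handle this by noting that the proof of Lemma~\ref{lem:response-stability-paper} is causal: its event-matching induction controls errors up to any intermediate regular time with the same constant. Using the $L^1$ component of $\mathfrak d$ as the inductive quantity closes the recursion.
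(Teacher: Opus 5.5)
\textbf{There is a genuine gap at the crux of the argument.} Your slab architecture and three-term decomposition match the paper's. However, the step that is supposed to break the circularity is false as stated. You claim the middle term's $L^1(0,t)$ norm is at most $C\frac1N\sum_j\|H_j^N-H_{q_j}\|_{L^1(0,t-\delta_k)}$, and that the middle current term on $[0,t_{m+1}]$ ``only sees traces on $[0,t_m]$''. Both claims fail.

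Positive delay makes the trace at time $t$ depend on \emph{spikes} before $t-\delta_k$, not on the \emph{trace} before $t-\delta_k$. Take a spike at $s\in(t_m-\delta_k,\,t_{m+1}-\delta_k)$. It contributes nothing to the trace on $[0,t_m]$, because $k_\tau$ vanishes on $[0,\delta_k)$, yet it changes the trace on $(t_m,t_{m+1}]$. So two firing histories can have zero $L^1(0,t_m)$ trace error and an order-one trace error on the next slab.

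With the $L^1$ trace component as your inductive quantity, the recursion is therefore circular. Lemma~\ref{lem:response-stability-paper} on $[0,t_{m+1}]$ bounds the trace error on $[0,t_{m+1}]$ by the current error on $[0,t_{m+1}]$. That current error itself contains the same trace error through the middle term. Closing the loop would need $C\,C_{\rm hit}<1$, which is exactly the smallness in $\alpha_\E+\alpha_\I$ the theorem says is not required.

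\textbf{The repair, which is what the paper does, is to propagate the bounded-Lipschitz firing-measure component of $\mathfrak d$ instead.} Since the mesh is below $\delta_k$, for $t\le t_{m+1}$ we have
\[
 H_j^N(t)-H_{q_j}(t)=\int_{[0,t_m]}k_{\tau(q_j)}(t-s)\,d\bigl(M_j^N-M_{q_j}\bigr)(s).
\]
Here $s\mapsto k_\tau(t-s)$ is a test function with $\|\cdot\|_\infty+\Lip(\cdot)\le\|k_\tau\|_\infty+\|k_\tau'\|_\infty$. Integrating in $t$ and averaging over $j$ bounds the middle term on $[0,t_{m+1}]$ by
\[
 C\frac1N\sum_j\|dM_j^N-dM_{q_j}\|_{\BL,t_m}^*\le C\frac1N\sum_j\mathfrak d_{q_j,t_m}(I_j^N,I).
\]
This involves only the previous slab, so the recursion is genuinely causal rather than contractive.

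Equivalently, within your own $E_m$ you can route the bound as follows: current error on $[0,t_m]$, then BL firing error on $[0,t_m]$ via response stability, then trace error on $[0,t_{m+1}]$. The remaining ingredients of your proposal then go through unchanged: response stability at intermediate regular times, its validity for all continuous $J$, and finite iteration over slabs.
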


\begin{proof}
Choose a partition $0=t_0<\cdots<t_L=T$ whose mesh is smaller than
$\delta_k$ and whose interior endpoints avoid class firing and release
times.  Let
\[
 D_m:=\frac1N\sum_{i=1}^N
 \mathfrak d_{q_i^N,t_m}(I_i^N,I).
\]
Decompose $I_i^N-I$ into the centered adjacency term, an average difference
between sparse and class emitted traces, and the class-proportion error.
Lemma~\ref{lem:graph-input-paper} controls the first term by $C\Delta_N$;
the last is at most $C\sum_q|\pi_q^N-\pi_q|$.

For $t\le t_m$, an emitted trace only uses spikes at
$s\le t-\delta_k<t_{m-1}$.  The bounded-Lipschitz part of the hybrid metric,
applied to the test function $s\mapsto k_\tau(t-s)$ and integrated in $t$,
therefore controls the middle term by $CD_{m-1}$.  Hence
\[
 \frac1N\sum_i\|I_i^N-I\|_{L^1(0,t_m)}
 \le C\left(\Delta_N+\sum_q|\pi_q^N-\pi_q|+D_{m-1}\right).
\]
Lemma~\ref{lem:response-stability-paper}, applied nodewise and averaged,
gives the same recursion for $D_m$.  Since $D_0=0$ and $L$ is finite,
induction proves \eqref{eq:fixed-kernel-comparison}.  The argument is causal
rather than contractive: newly displaced spikes cannot affect the same slab
because of the positive delay.
\end{proof}

\begin{lemma}[Typewise Erd\H{o}s--R\'enyi mixing]
\label{lem:er-mixing-paper}
Let $1\le\lambda_N\le N$, let $a_{ii}^N=0$, and let $a_{ij}^N$, $i\ne j$,
be independent $\operatorname{Bernoulli}(\lambda_N/N)$ variables.  For deterministic class
assignments, or after conditioning on assignments independent of the graph,
there are numerical $C,c>0$ such that
\begin{equation}
 \Prob_G\left(
 \Delta_N>\frac{C}{\sqrt{\lambda_N}}+\frac1N
 \right)\le4e^{-cN}.
 \label{eq:er-mixing}
\end{equation}
\end{lemma}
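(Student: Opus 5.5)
The plan is a union bound over sign vectors, after separating off the deterministic diagonal correction. Since $a_{ii}^N=0$, the mean matrix is $\Exp_G A_N=\frac{\lambda_N}{N}(\one\one^\top-\mathrm{Id})$. This gives
\[
 \frac{A_N}{\lambda_N}-\frac{\one\one^\top}{N}
 =\frac{A_N-\Exp_GA_N}{\lambda_N}-\frac{\mathrm{Id}}{N}.
\]
For $\|u\|_\infty\le1$ we have $\|N^{-1}P_\tau^Nu\|_1\le1$. After the outer factor $1/N$ in \eqref{eq:mixing-defect}, the identity term therefore contributes at most $1/N$, which is exactly the additive $1/N$ in \eqref{eq:er-mixing}. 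It then suffices to show that, with probability at least $1-4e^{-cN}$ and for both $\tau$,
\[
 \frac{1}{N\lambda_N}\bigl\|(A_N-\Exp_GA_N)P_\tau^N\bigr\|_{\infty\to1}\le\frac{C}{\sqrt{\lambda_N}}.
\]

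Next I reduce the $\infty\to1$ norm to a finite maximum. Write $\|v\|_1=\max_{s\in\{\pm1\}^N}s^\top v$. For fixed $s$, the map $u\mapsto s^\top Bu$ is linear, so its supremum over the cube $[-1,1]^N$ is attained at a vertex. Hence
\[
 \|B\|_{\infty\to1}=\max_{s,u\in\{\pm1\}^N}s^\top Bu,
\]
a maximum over $4^N$ pairs. For a fixed pair $(s,u)$ and the centered matrix $B=(A_N-\Exp_GA_N)P_\tau^N$, we can write
\[
 s^\top Bu=\sum_{i\ne j,\ \tau(q_j^N)=\tau}s_iu_j\bigl(a_{ij}^N-\lambda_N/N\bigr).
\]
This is a sum of at most $N^2$ independent centered variables bounded by $1$. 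Its variance is at most $N^2\cdot\lambda_N/N=N\lambda_N$. The class assignments are deterministic or independent of the graph, so after conditioning on them the projection $P_\tau^N$ is fixed.

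Bernstein's inequality with $t=C N\sqrt{\lambda_N}$ gives
\[
 \Prob_G\bigl(s^\top Bu>t\bigr)\le\exp\!\Bigl(-\frac{t^2}{2(N\lambda_N+t/3)}\Bigr).
\]
Because $\lambda_N\ge1$, we have $t/3\le CN\lambda_N/3$. The exponent is therefore at least $\frac{C^2N}{2(1+C/3)}$, which grows linearly in $C$ times $N$. Taking the union bound over $4^N$ sign pairs and the two types $\tau$ bounds the failure probability by
\[
 2\cdot4^N\exp\!\Bigl(-\frac{C^2N}{2+2C/3}\Bigr).
\]
Choosing the numerical constant $C$ large enough makes this at most $4e^{-cN}$ for some numerical $c>0$. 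Dividing $t$ by $N\lambda_N$ gives the bound $C/\sqrt{\lambda_N}$, which completes the argument.

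The only delicate point is the choice of $C$ in the last step. The union bound over $4^N$ sign pairs must be beaten by the Bernstein tail. This works uniformly in $\lambda_N\in[1,N]$ only because $\lambda_N\ge1$ keeps the sub-exponential term $t/3$ comparable to the variance $N\lambda_N$. Apart from that, the steps are routine, and the uniformity over graph-dependent histories is automatic because the bound controls the operator norm itself.
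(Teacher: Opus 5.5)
Your proposal is correct and follows the paper's proof. Both reduce the $\ell^\infty\to\ell^1$ norm to $4^N$ sign-vector pairs, apply Bernstein with variance at most $N\lambda_N$ at level $CN\sqrt{\lambda_N}$, take a union bound over the two types, and let the omitted diagonal contribute the additive $1/N$. You also make explicit a step the paper leaves implicit: because $\lambda_N\ge1$, the Bernstein exponent is of order $CN$, which is what lets it beat the $4^N$ entropy factor.
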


\begin{proof}
After centering $A_N$, the extreme points of the
$\ell^\infty\!\to\!\ell^1$ norm are sign vectors.  For fixed
$\tau$ and $u,v\in\{-1,1\}^N$, Bernstein's inequality bounds
$v^\top(A_N-\Exp A_N)P_\tau^Nu$, whose variance is at most
$N\lambda_N$, by $LN\sqrt{\lambda_N}$ with probability
$1-2e^{-c_LN}$.  A union bound over two types and at most $4^N$ sign-vector
pairs gives the centered part of \eqref{eq:er-mixing}; the omitted diagonal
contributes $1/N$.
\end{proof}

\begin{corollary}[Moderately sparse fixed-kernel limit]
\label{cor:fixed-kernel-er}
Assume
$\lambda_N\to\infty$, $\lambda_N/N\to0$, and $\pi^N\to\pi$.  On the
directed Erd\H{o}s--R\'enyi graph of
Lemma~\ref{lem:er-mixing-paper}, at every regular horizon,
\begin{equation}
 \frac1N\sum_{i=1}^N
 \mathfrak d_{q_i^N,T}(I_i^N,I)
 =O_{\Prob_G}\!\left(
 \frac1{\sqrt{\lambda_N}}+\sum_q|\pi_q^N-\pi_q|
 \right).
 \label{eq:fixed-kernel-er-rate}
\end{equation}
Writing
\[
 \Sigma_N:=\frac1N\sum_{i=1}^N
 \delta_{\tau(q_i^N)}\otimes dM_i^N,
 \qquad
 \Sigma:=\sum_{q=1}^Q\pi_q\,
 \delta_{\tau(q)}\otimes dM_q,
\]
the measures $\Sigma_N$ converge in probability to $\Sigma$ in
bounded-Lipschitz distance on $\{\E,\I\}\times[0,T]$, with the discrete
metric on the type coordinate.
The realized average degree satisfies $d_N/\lambda_N\to1$ in probability.
\end{corollary}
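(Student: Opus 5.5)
The plan has three parts: the rate from Theorem~\ref{thm:fixed-kernel-comparison} and Lemma~\ref{lem:er-mixing-paper}, the empirical-measure convergence, and the degree ratio.

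\textbf{The rate.} Condition on the class assignments, which are deterministic or independent of the graph. On the event
\[
\mathcal E_N:=\{\Delta_N\le C\lambda_N^{-1/2}+N^{-1}\},
\]
Theorem~\ref{thm:fixed-kernel-comparison} applies deterministically for all large $N$. The constant $C_{T,\mathrm{data}}$ does not depend on the graph, because Lemma~\ref{lem:graph-input-paper} holds uniformly over histories. This gives
\[
\frac1N\sum_i\mathfrak d_{q_i^N,T}(I_i^N,I)\le C\bigl(\lambda_N^{-1/2}+N^{-1}+\|\pi^N-\pi\|_1\bigr).
\]
Since $\lambda_N\le N$, the term $N^{-1}\le\lambda_N^{-1/2}$ is absorbed. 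Lemma~\ref{lem:er-mixing-paper} gives $\Prob_G(\mathcal E_N^c)\le4e^{-cN}\to0$, which yields \eqref{eq:fixed-kernel-er-rate} in $O_{\Prob_G}$ form. The bound $\lambda_N\ge1$ holds eventually because $\lambda_N\to\infty$.

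\textbf{Convergence of $\Sigma_N$.} Take a test function $\varphi(\tau,s)$ with $\|\varphi\|_\infty+\Lip\le1$ on $\{\E,\I\}\times[0,T]$, using the discrete metric in the type coordinate. Each slice $\varphi(\tau,\cdot)$ is then an admissible bounded-Lipschitz test function on $[0,T]$. Group nodes by class and split the error into two terms:
\[
\Big|\int\varphi\,d(\Sigma_N-\Sigma)\Big|\le\frac1N\sum_i\|dM_i^N-dM_{q_i^N}\|_{\BL,T}^*+\sum_q|\pi_q^N-\pi_q|\,\|dM_q\|_{\BL,T}^*.
\]
The first term is dominated by the hybrid metric, since $dM_i^N=dM_{q_i}[I_i^N]$ and $dM_{q}=dM_q[I]$. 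The second term uses $\|dM_q\|_{\mathrm{TV}}\le1+\lfloor T/\rho_*\rfloor$ from the firing bound. Both terms vanish in probability, and the bound is uniform in $\varphi$, so the supremum over $\varphi$ does too.

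\textbf{Degree ratio.} The count $|E_N|\sim\operatorname{Bin}(N(N-1),\lambda_N/N)$ has mean $(N-1)\lambda_N$ and variance at most $N\lambda_N$. Chebyshev's inequality therefore gives $|E_N|/(N\lambda_N)\to1$ in probability, because $N\lambda_N\to\infty$. Together with $(N-1)/N\to1$ this proves $d_N/\lambda_N\to1$.

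The main point needing care is the uniformity in the first part. The nodewise firing histories depend on the graph, so the comparison constant has to be deterministic on $\mathcal E_N$. This is exactly what the $\infty\to1$ supremum in \eqref{eq:mixing-defect} provides, and it is why no independence between the graph and the histories is needed.
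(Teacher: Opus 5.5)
Your proposal is correct and follows the paper's own argument. You apply Theorem~\ref{thm:fixed-kernel-comparison} deterministically on the high-probability event of Lemma~\ref{lem:er-mixing-paper}, absorbing the $1/N$ term into $\lambda_N^{-1/2}$; you obtain the convergence of $\Sigma_N$ from the bounded-Lipschitz component of the hybrid metric plus the class-proportion error; and you get $d_N/\lambda_N\to1$ from Chebyshev's inequality for $|E_N|\sim\operatorname{Bin}(N(N-1),\lambda_N/N)$. You also spell out details the paper leaves implicit, namely the graph-independence of the comparison constant, the type-slicing of test functions, and the variance bound.
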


\begin{proof}
Combine Theorem~\ref{thm:fixed-kernel-comparison} with
Lemma~\ref{lem:er-mixing-paper}.  The firing-measure component of the hybrid
metric, together with the class-proportion error in
\eqref{eq:fixed-kernel-comparison}, gives the measure assertion.  Finally,
$|E_N|\sim\operatorname{Bin}(N(N-1),\lambda_N/N)$, and Chebyshev's
inequality gives the degree claim.
\end{proof}

\begin{remark}[Dependence on the delay scale]
\label{rem:nonuniformity}
The constant in \eqref{eq:fixed-kernel-comparison} depends on the minimum
delay, kernel norms and derivatives, the event list, and the margin to
grazing.  It may deteriorate as the kernel collapses.  Consequently,
Corollary~\ref{cor:fixed-kernel-er} does not pass to $\delta_0$, and no bound
uniform over the collapsing-kernel scale is asserted.  This is consistent
with Theorem~\ref{thm:random-block}.
\end{remark}

\section{Selection boundary and implications}
\label{sec:discussion}

At a fixed positive delay and before grazing, the response map is stable in
an $L^1$ current metric, and sparse graph mixing propagates causally from one
time slab to the next.  Under singular collapse, componentwise weak
convergence forgets the path inside the shrinking E/I batch.  Threshold and
reset convert the missing path information into a discrete event; graph
replication makes the difference macroscopic.

The negative theorem is a diagonal statement.  For every deterministic
$\varepsilon_N\downarrow0$, two regularized families have the same weak kernel
data but different firing-count limits; the theorem neither compares two
general iterated limits nor defines an atomic network against which either
family must converge.

The two graph ensembles have different roles.  The fixed-kernel result
assumes typewise mixing of the normalized
adjacency matrix and controls a finite-class delayed mean field.  The random
block counterexample concentrates the aggregate input received by each
target, but its block structure is not asserted to satisfy the global
mixing hypothesis of Theorem~\ref{thm:fixed-kernel-comparison}.  The
fixed-kernel theorem thus describes a safe regime rather than a converse on
the same ensemble.

A singular theory that assigns a unique firing-count limit on this class must
retain additional structure or exclude the order-sensitive configurations.
One may retain the rescaled type-marked timing profile as a
graph-completion variable, prescribe an atomic collision resolver, impose a
commutative response rule, or prove that the limiting population places no
mass in the order-sensitive threshold set.  Any such proposal must also
supply estimates uniform as the delay vanishes.  Weak convergence of the
two component kernel measures, by itself, supplies none of these data.

Replacing narrow kernels by their masses alone need not preserve population
spike counts in a signed threshold--reset network.  If excitatory and
inhibitory contributions collide near threshold, their within-step or
within-pulse order can select different counts even when all
bounded-continuous typewise tests of the kernel measures have the same limit.
An instantaneous signed threshold--reset model in this class therefore needs
a specified collision convention.  This does not rule out instantaneous
models.  The convention becomes part of both the model specification and any
convergence claim.

\appendix
\section{Explicit persistence-through-reset construction}
\label{app:repeated-reset}

We prove Proposition~\ref{prop:repeated-reset}.  Choose $a,b,c>0$,
$x<\theta$, and $v_R<\theta$ so that
\begin{equation}
 x+a-b+c<\theta<\min\{x+a,v_R+c\}.
 \label{eq:probe-parameters}
\end{equation}
For example, take
\[
 \theta=1,
 \quad x=0,
 \quad a=\frac32,
 \quad b=1,
 \quad v_R=\frac34,
 \quad c=\frac38.
\]

For each $N$, let $n_N=\lfloor N/4\rfloor$ and form $n_N$ disconnected
four-node motifs
\begin{equation}
 E_\ell\longrightarrow C_\ell\longleftarrow I_\ell,
 \qquad
 P_\ell\longrightarrow C_\ell,
 \qquad \ell=1,\dots,n_N,
 \label{eq:probe-motif}
\end{equation}
with weights $+a,-b,+c$, respectively.  Give $E_\ell$ and $P_\ell$ type
$\E$, give $I_\ell$ type $\I$, and assign $C_\ell$ type $\E$ for
definiteness.  Leave all remaining vertices isolated.

Every vertex has refractory period $\rho$ and an initially active age.
Initialize $E_\ell$ and $I_\ell$ at threshold and $C_\ell$ at $x$.  The
first two sources fire at time zero.  Targets have threshold $\theta$, reset
$v_R$, and zero drift.  Give $P_\ell$ a threshold $\theta_P$, reset
$r_P<\theta_P$, no incoming edges, and drift
\[
 b_P(z,\alpha)=\lambda(z-r_P),
 \qquad \lambda>0.
\]
Starting it at
\[
 z_P^0=r_P+(\theta_P-r_P)e^{-\lambda T_P}
\]
makes its pre-firing voltage
$z_P(t)=r_P+(\theta_P-r_P)e^{\lambda(t-T_P)}$.  Thus it is strictly below
threshold for $t<T_P$, fires exactly once at $T_P$ in both
regularizations, resets, and remains at $r_P$.

Put $y=x+a-b$.  The first signed batch makes each target fire once under
E-first and not under I-first by
Lemma~\ref{lem:single-target-order-paper}.  Under E-first the target is
clamped while the inhibitory tail passes and is later released at $v_R$;
under I-first it ends the batch at $y$.  For all large $N$,
\[
 4\varepsilon_N<\min\{\rho/2,T_P-\rho,t_*-T_P\}.
\]
The first batch has therefore ended before $\rho/2$ and the initial source
counts agree, so the first gap is $n_N/N\to1/4$.

The E-first target is released before the probe pulse arrives.  The probe
delivers the same positive increment $c$ in both systems, and
\eqref{eq:probe-parameters} gives
\[
 y+c<\theta<v_R+c.
\]
Thus the E-first target fires a second time during the probe while the
I-first target stays subthreshold.  No later input or target drift exists,
so target counts at $t_*$ are two and zero.  The three source counts agree,
and the second gap is $2n_N/N\to1/2$.  Every target has in-degree three,
every source has out-degree one, and the sign of each edge is determined by
its source type.

\printbibliography

\end{document}